\theoremstyle{plain}
\newtheorem{Thm}{Theorem}
\newtheorem{Cor}{Corollary}
\newtheorem{Lem}{Lemma}
\newtheorem{Prop}{Proposition}
\theoremstyle{definition}
\newtheorem{Deff}{Definition}
\newcommand{\mat}[4]{\left( \begin{array}{cc}
{#1} & {#2} \\
{#3} & {#4} \end{array} \right) }
\newcommand{\threesurf}{\ensuremath{\Gamma^3 \backslash {\mathcal H}} }
\begin{document}


\title{McShane's Identity,  Using  Elliptic Elements}
\author{Thomas A. Schmidt}
\address{Oregon State University\\ Corvallis, OR 97331}
\email{toms@math.orst.edu}
\author{Mark Sheingorn}
\address{CUNY - Baruch College \\
   New York, NY 10010} 
\email{marksh@alum.dartmouth.org}
\subjclass[2000]{57M50, 20H10}
\date{18 February 2008}

\begin{abstract} We  introduce a new method to establish McShane's Identity.  Elliptic elements of order two in the Fuchsian group uniformizing the quotient of a fixed once-punctured hyperbolic  torus act so as to  exclude  points as being highest points of geodesics.   The highest points of simple closed geodesics are already given as the appropriate complement of the regions excluded by  those elements of order two that factor hyperbolic elements whose axis projects to be simple.     The widths of the intersection with an appropriate horocycle of the excluded regions  sum to give McShane's value of $1/2$.   The remaining points on the horocycle are highest points of simple open geodesics,  we show that this set has zero Hausdorff dimension.    
\end{abstract}

\maketitle

\section{Introduction}   In his 1991 Ph.D. dissertation,  G. McShane proved the striking identity 

\[
\sum_{\gamma}\, \dfrac{1}{1 + e^{\ell(\gamma)}} = \dfrac{1}{2}\;,
\]
 where the sum is taken over all simple closed geodesics of any fixed hyperbolic once-punctured torus, and $\ell(\gamma)$ is the length of the geodesic.    This has been reproved in various ways:   \cite{Mc3}, \cite{B}, \cite{GSR};  and generalized variously: \cite{Mc},  \cite{Mc2}, \cite{AHS},  \cite{TWZ}.    The identity has had deep applications due to  Mirzakhani \cite{M}  (see also \cite{R}), \cite{M2}, \cite{M3}.

 We give a proof of the original identity that is, in a sense,   intermediate to McShane's original proof and Bowditch's proof by Markoff triples \cite{B}:   it is geometric; but  lengths of geodesics do not enter directly.      Similar to \cite{GSR}, we take a classical approach; ours involves a uniformizing Fuchsian group.
 We avoid  McShane's  invocation of a deep result of   Birman and Series;  in its place, we show directly  that the appropriate complementary set is of Hausdorff dimension zero, and thus certainly of Lebesgue measure zero, as the identity itself requires.   This Cantor set is the set of apexes of simple open geodesics that achieve their height.       
 
 In the following two paragraphs, we sketch the proof. It is related most directly to the singular punctured sphere that is the quotient of the punctured hyperbolic   torus by its elliptic involution.  The simple closed geodesics on the torus and this sphere  are in 1--1 correspondence (and more). On the sphere, each simple closed geodesic bounces between two elliptic fixed points;   thus,  any hyperbolic element of the uniformizing Fuchsian group whose axis projects to the simple closed geodesic can be factored as a product of elliptics.    But, our first lemma shows that any elliptic element of order two  increases radii of circles whose apexes lie within its {\em uplift  region}, bounded by Euclidean hyperbolas,  in the Poincar\'e upper half plane, $\mathcal H$.

    It is well known (from the work of H. Cohn and others) that there is a lowest horocycle (thus, informally, loop about the cusp) on the hyperbolic torus  beyond which no simple geodesic penetrates; this is true as well for the quotient orbifold.   The appropriate tree of simple closed geodesics' elliptic factorizations gives a set of uplift regions (suitably trimmed) that fit together so as to raise all apexes (below the lift of the fundamental horocycle) other than those of simple closed geodesics' highest lifts.    These regions meet the lift of the fundamental horocycle in disjoint intervals, our {\em excision intervals}, indexed by the tree of simple closed geodesics.   One easily shows that the union of the excision intervals lies in a finite interval;  the complement of their union is a Cantor set.  We show that along all but countably many branches of the tree, the limit of the ratios of  excised interval to ambient interval is 1.     The Cantor set thus has Hausdorff dimension zero.    It in particular has Lebesgue measure zero; but,  the length  of each excised interval is a multiple of a corresponding $\dfrac{1}{1 + e^{\ell(\gamma)}}$, and the full interval has length one-half this multiple, McShane's Identity follows.    
   
 \subsection{Further Remarks} 
 
   The Cantor set in our construction is the set of apexes of lifts of those open simple geodesics that have a highest apex lift.   The endpoints of our excision intervals correspond to geodesics that spiral about a simple geodesic, the remaining points correspond to `irrational laminations'.    These facts can easily be verified by using \cite{H} (see especially Proposition 18 there) or the more recent \cite{BZ}.       Whereas our excision intervals lie along the fundamental horocycle,   \cite{Mc3} finds his {\em gaps} along any fixed horocycle  closer to the cusp than the fundamental horocycle.

   Our approach relies in part on replacing Fricke's equation (in trace coordinates on the Teichm\"uller space): $a^{2} + b^{2} + c^{2} = abc$ by an adjusted equation:  $x^{2}+ y^{2}+ z^{2} = a x y z$.    In the modular case of $a=b=c=3$, it was Cohn's \cite{C} recognition that the adjusted equation is the classical Markoff equation that led him to investigate the geodesy of the corresponding once-punctured hyperbolic torus.    
   
   Our approach should  recover results of \cite{TWZ2} in the setting of a hyperbolic torus with  geodesic boundary.   Generalizations to higher genus must be carefully pursued:   \cite{BLS} shows that there are non-simple geodesics whose self-intersections are not caused by parabolic elements; such geodesics must then be low in the corresponding height spectrum 
and thus of highest lifts of apex exterior to all uplift regions.      It would be interesting to generalize our techniques to hyperbolic surfaces with more general conical singularities, see \cite{DN} and \cite{TWZ}.

Finally, we mention that our approach of trees of triples of order two elements is strongly reminiscent of work of L. Yu. Vulakh on the Markoff and Lagrange spectra and their generalizations, see for example \cite{V}. 
  
\subsection{Outline of Paper}   
Section \ref{backgr} provides background material.   In \S \ref{uplift} we define and give basic results on the basic tool, the uplift regions.   In Section \ref{frickeDoms} we normalize by using work of A.~ Schmidt so as to combine our earlier work on triples of elliptic elements with standard results on trees of simple closed geodesics on hyperbolic tori.   
 We finish the proof in \S \ref{finalArg}.

\subsection{Notation}
 We use $X + i Y$ to denote points in $\mathcal H$.    We call a geodesic of $\mathcal H$ with its standard hyperbolic metric an {\em h-line}.  To increase legibility, all h-lines mentioned are non-vertical (in the Euclidean sense) except as explicitly stated.   
 
\subsection{Thanks}    
We thank Y. Cheung for conversation related to this work.   We also thank the referee for suggestions and references. 

\section{Background}\label{backgr}

\subsection{Tori, Simple Closed Geodesics and Automorphisms}\label{BackSCG}
 
     Each hyperbolic once-punctured torus has a Weierstrass involution.   The quotient of this   torus by its involution gives a singular punctured sphere.   Indeed, there is a one-to-one correspondence between the sets of uniformizing groups:  Fuchsian groups of signature $(0; 2,2,2, \infty)$ and Fuchsian groups of signature $(1;\infty)$, see say  \cite{Sch}.    Furthermore, the simple closed geodesics of each such pair are also in one-to-one correspondence, see say \cite{Sh} (here one finds that a simple closed geodesic on a hyperbolic punctured sphere with three elliptic order two 
 singularities  actually bounces back and forth between two of the singularities); indeed, the lengths of corresponding simple closed geodesics are the same: in fact, there is a common element (primitive in each of the groups) whose axis projects to both geodesics.

   We refer the reader to Section 2 of \cite{JM} for a particularly nice discussion of the structure of the graph of generators of a group of signature $(1;\infty)$,  see the left side of Figure \ref{genDualFig}.   Here, vertices represent group elements, up to inverses and conjugation.     Edges connect pairs that generate the group (which is a free group on two elements); one calls either element in such a pair a {\em generating element}. 
  Any generating element has axis projecting to a simple closed geodesic;  its inverse and any conjugate elements give the same curve (up to orientation).   Thus,  our vertices can be seen as corresponding to the simple closed geodesics of the torus uniformized by the group.     As Bowditch  (see especially the discussion on p. 49 of \cite{BMR}) pointed out,  the dual graph is particularly helpful when discussing Fricke triples, see the right side of Figure \ref{genDualFig}.            (Each node of the resulting tree corresponds to a triple of simple closed geodesics such that a triple of corresponding   open simple  ``cusped'' geodesics is mutually disjoint.)

\begin{figure}
\begin{center}
\includegraphics{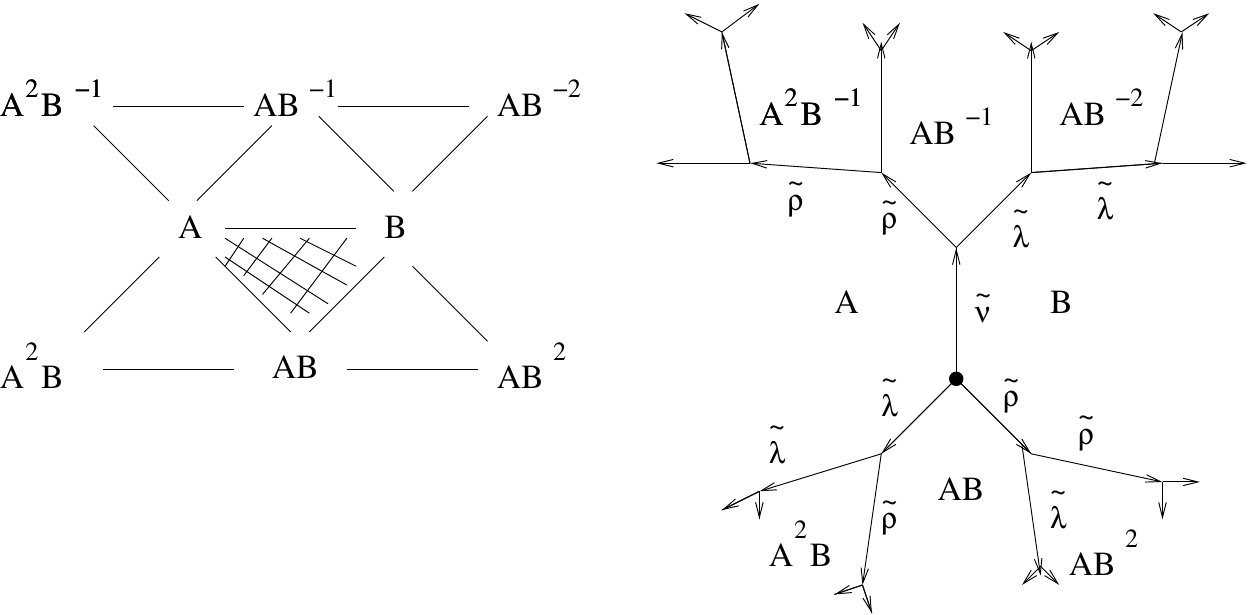}
\caption{Equivalence classes of toral generators and dual graph of triples.}
\label{genDualFig}
\end{center}
\end{figure}

\subsection{Fricke's Equation and Explicit Groups}

We are interested in explicit lifts of simple closed geodesics.  For this, we use a variation of A. Schmidt's application \cite{Sch} of work of Fricke.  Suppose that  positive real $a, b, c$ satisfy  the Fricke equation

\begin{equation} a^2 + b^2 + c^2 = a b c\,, \label{FrickeEq}
\end{equation}
the elements 
\[T_{0} :=  \mat{0}{-a/c}{c/a}{0}, \; 
T_{1} := \mat{a/c}{*}{b/a}{-a/c}\,,\;
T_{2} := \mat{a-b/c}{* }{1}{-a +b/c}
\]
(of determinant one) generate a group of signature $(0; 2, 2, 2; \infty)$.   
 Note that 
 \[ T_2\cdot T_1\cdot T_0 = S^{a}: z \mapsto z + a\]
  is the fundamental translation of this group. 
A full set of  orbit representatives under the action of the Teichm\"uller group is given when one takes $2 < a \le b \le c < ab/2$; this can be deduced from \cite{Sch}, see also  \cite{W}; we will always assume that our Fricke triples $(a,b,c)$ satisfy this restriction. Note that the
modular case of \threesurf corresponds to
$a=b=c=3$ and in this case $T_{j}$ is the conjugate of $T_{0}$ by the translation $z \mapsto z + j$.    (Note that our $T_{0}$ is not that of \cite{Sch}.)

\subsection{Fixed Point Triples and   Fundamental Domains}

For ease of presentation, in \cite{scgArt} we restricted to the modular case.  However, as we noted, our arguments extend to the full Teichm\"uller case. 

\begin{Prop}\cite{scgArt}\label{pssiFunDomThm}   Let the signature $(0; 2,2,2; \infty)$-orbifold $\mathcal U = \Gamma \backslash \mathcal H$ correspond to the Fricke triple $(a,b,c)$.   Each simple closed geodesic has a highest lift which is the axis of 
$S^{a} E$, where $E \in \Gamma$ is elliptic of order two.   There is a factorization 
of $S^{a} E = GF$ as the product of elliptic elements such that 
a highest lifting segment  of this simple geodesic joins the fixed point $f$ of $F$ 
to the fixed point $g$ of  $G$.     Let $e$ be the fixed point of $E$.   A fundamental domain for $\Gamma$  is  given by the
hexagon of vertices: $\infty$, $e$, $f$, $F(e)$, $g$, $a+e$.   In particular, $\{E, F, G\}$ generates $\Gamma$.
\end{Prop}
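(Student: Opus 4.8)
The plan is to build $\Gamma$ out of the geodesic in four stages: (i) write the primitive element covering a simple closed geodesic as a product $GF$ of two half-turns sitting at its ``bounce points''; (ii) select and normalize a highest lift; (iii) identify the third half-turn $E$, which is the crux; and (iv) read off the hexagon and the generation statement from Poincar\'e's polygon theorem.

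For (i)--(ii): by \cite{Sh} a simple closed geodesic $\gamma$ on $\mathcal U$ bounces between two of the three cone points $P_i,P_j$, reflecting off each (the cone angle at an order-two point is $\pi$, so a geodesic meeting it returns along itself); hence one full period of $\gamma$ is the segment $P_i\to P_j$ traversed forth and back, of length $2\,d(P_i,P_j)$, and a lift $\ell\subset\mathcal H$ of the axis of $\gamma$ meets an alternating bi-infinite sequence of lifts of $P_i$ and $P_j$. Choosing two consecutive such lifts, $f$ over $P_i$ and $g$ over $P_j$, fixed by half-turns $F,G\in\Gamma$, each of $F,G$ preserves the geodesic through $f$ and $g$, so the orientation-preserving product $GF$ preserves it too; being nontrivial, $GF$ is hyperbolic with that geodesic as its axis. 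Thus $\ell$ is the geodesic $fg$, and $GF$ translates along $\ell$ by $2\,d(f,g)=2\,d(P_i,P_j)=\ell(\gamma)$, so $GF$ is the primitive element of $\Gamma$ whose axis covers $\gamma$. Since $\gamma$ is compact it avoids a horoball about the cusp, and only finitely many $\langle S^a\rangle$-classes of lifts of $\gamma$ rise above any given height; so a lift of maximal Euclidean apex exists. Replacing $\ell,F,G$ by a $\Gamma$-conjugate I take $\ell$ to be this highest lift, and after a power of $S^a$ I take $f,g$ to be the consecutive cone-point lifts on $\ell$ whose subsegment carries the apex, with that apex having real part in $[0,a)$.

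For (iii), set $E:=S^{-a}GF$, so that $S^aE=GF$ by construction. \textbf{The crux is that for the highest lift $E$ is elliptic of order two.} I would obtain this from the elementary remark that any element of $\Gamma$ interchanging $\infty$ with a finite point of $\mathbb R$ has trace $0$, hence is a half-turn. Since $GF$ does not fix $\infty$, both $E(\infty)=(GF)(\infty)-a$ and $E^{-1}(\infty)=(FG)(\infty)$ are finite, so it is enough to prove
\[
(GF)(\infty)-(GF)^{-1}(\infty)=a .
\]
As a half-turn with fixed point $p$ carries $\infty$ to $\Re p$, the left-hand side equals $G(\Re f)-F(\Re g)$, so the claim is a purely metric statement about where $f$ and $g$ sit on $\ell$ relative to the cusp. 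The input is that the two vertical h-lines over the endpoints of $\ell$ are lifts of the two complete simple geodesics spiralling onto $\gamma$, and that maximality of the apex of $\ell$ fixes the position of $\ell$ within the $\langle S^a\rangle$-orbit of those verticals. Converting ``highest apex'' into the exact value $a$ on the right is the step I expect to be the main obstacle: it is where the combinatorics of simple geodesics issuing from the cusp---McShane's and Bowditch's \cite{B} analysis of the once-punctured torus, transported across the degree-two cover to $\mathcal U$---has to be brought in. As a sanity check, in the modular case $a=3$ the identity is a one-line matrix computation: for the shortest geodesic one may take $GF=T_1T_0$, and then $E=S^{-3}T_1T_0$ has trace $0$.

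For (iv), let $e$ be the fixed point of $E$ and let $\mathcal P$ be the hexagon with vertices $\infty,e,f,F(e),g,a+e$. Its two sides running to $\infty$ are paired by $S^a$; the sides $[e,f]$ and $[f,F(e)]$ by the half-turn $F$ about $f$; and $[F(e),g]$, $[g,a+e]$ by the half-turn $G$ about $g$, since $G(F(e))=(GF)(e)=S^aE(e)=S^a(e)=a+e$. The Poincar\'e side-pairing conditions then hold: $\infty$ is an ideal vertex fixed by the parabolic $S^a$; the cycles at $f$ and at $g$ are single vertices of interior angle $\pi$ with cycle transformations the order-two $F$, $G$; and the cycle $e\to a+e\to F(e)\to e$ has angle sum $\pi$ and cycle transformation conjugate to $E$, again of order two. (That $\mathcal P$ is a genuine hexagon, with its vertices in the stated cyclic order, follows from the normalization in (ii).) By Poincar\'e's polygon theorem $\Gamma_0:=\langle S^a,F,G\rangle$ is discrete with fundamental domain $\mathcal P$; its angle sum is $3\pi$, so by Gauss--Bonnet $\mathrm{Area}(\mathcal P)=(6-2)\pi-3\pi=\pi$, the area of $\mathcal U=\Gamma\backslash\mathcal H$, whence $[\Gamma:\Gamma_0]=1$ and $\Gamma_0=\Gamma$. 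Finally $E=S^{-a}GF$ and $S^a=GFE$ give $\langle E,F,G\rangle=\langle S^a,F,G\rangle=\Gamma$, completing the plan.
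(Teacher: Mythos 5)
The paper does not prove this statement here; it is quoted from the authors' earlier work \cite{scgArt}, so there is no ``paper's own proof'' in the present text to compare against. Judged on its own terms, your plan is sensible in outline, and parts of it are sound: the reduction of ``$E=S^{-a}GF$ is a half-turn'' to the single numerical identity $(GF)(\infty)-(FG)(\infty)=a$ is a correct and slick observation (trace zero is equivalent to $M(\infty)=M^{-1}(\infty)$), and step~(iv) is essentially complete: the side pairings are right, the cycle transformation at $\{e,\,a+e,\,F(e)\}$ really is (conjugate to) $E$ since $FGS^{a}=F\,G\,(GFE)=E$, the angle-sum conditions of Poincar\'e's theorem are exactly the order-two conditions on $E,F,G$, and the area count $4\pi-3\pi=\pi=\mathrm{Area}(\Gamma\backslash\mathcal H)$ forces $\langle S^{a},F,G\rangle=\Gamma$.

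The genuine gap is precisely at your step~(iii), and you flag it yourself: you never prove $(GF)(\infty)-(FG)(\infty)=a$, instead deferring to ``the combinatorics of simple geodesics issuing from the cusp---McShane's and Bowditch's analysis.'' That is not a proof; it is the whole content of the proposition. Worse, leaning on McShane--Bowditch would be circular in spirit here, since this proposition is a load-bearing lemma in a paper whose goal is to re-derive McShane's identity by independent means. Your sanity check $GF=T_1T_0$ at $a=3$ confirms the formula in one example, but it does not show why a \emph{highest} lift always produces an order-two $E$. Notice also that you are implicitly using more than you say: that a half-turn preserving $\ell$ must fix a point \emph{on} $\ell$ (true, but worth saying), and that the six vertices really do occur in the cyclic order required for $\mathcal P$ to be an embedded hexagon to which Poincar\'e's theorem applies---you punt this with ``follows from the normalization in (ii),'' but the normalization in (ii) only pins down the real part of the apex, not the relative positions of $e$, $F(e)$, $f$, $g$, $a+e$.

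The published route (visible in outline from the surrounding Corollaries~1--2 and Section~4 of this paper) is effectively the reverse of yours: one starts from the algebraic side, with the group presented by three order-two generators whose product is $S^a$, builds the tree of generating triples via the moves $\nu,\lambda,\rho$ with heights tracked by the adjusted Fricke equation, and shows that every node yields a fundamental hexagon and that the $GF$ arising at each node has axis a highest lift of the corresponding simple closed geodesic. In that approach the order-two nature of $E$ is built in from the start, and what has to be proved is the ``highest lift'' claim; in yours the ``highest lift'' normalization is free, and what has to be proved is the order-two claim. Both are legitimate strategies, but you have only completed the easy half of yours. To salvage the plan you would need a self-contained argument for the trace-zero identity---for instance, showing directly that the cusp lift $\infty$ and its $GF$-orbit interlace the endpoints of $\ell$ in a way forced by the highest-apex normalization together with the fact that $\gamma$ is simple and bounces off exactly two cone points---and that argument is currently missing.
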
 

  Given a fixed Fricke triple $(a,b,c)$,  we have the corresponding {\em adjusted Fricke equation}
\begin{equation} x^2 + y^2 + z^2 = a \, x y z\,. \label{AdjFrickeEq}
\end{equation}
Note that when $a=3$ the adjusted Fricke equation is exactly Markoff's equation.  

 Recall that the imaginary part,  $Y$,   of a point $X + i Y$  is its {\em height}.  The factorization $S E = G F$ can be used to show that the hyperbolic $GF$, whose axis projects to a simple closed geodesic on the surface, has trace $a z$, where $1/z$ is the height of the fixed point of $E$.   
 One can show that there is such a factorization for every simple closed geodesic, and since the adjusted Fricke equation is satisfied by the traces of appropriate triples of simple hyperbolic elements, one finds the following result. 
   
\begin{Cor}\cite{scgArt}\label{gotTriples}  Let $E, F, G$ be as above.  Then the fixed points of $E, F, G$ have respective  heights  $1/z, 1/y, 1/x$,  whose inverses  give a triple satisfing the adjusted Fricke Equation,  and with $z = \max\{x, y, z\}$.   Furthermore, the simple closed geodesic that lifts to  the axis of $S^{a}E$ has height $r_{a}(z) = \sqrt{a^{2}/4 - 1/z^{2}}$.    The closed geodesic that lifts to the axis of  $ES^{a}E S^{-a}$ has height $R_{a}(z) = \sqrt{a^{2}/4 + 1/z^{2}}$.
\end{Cor}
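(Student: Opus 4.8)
The plan is to pass to explicit matrices. Recall that the order-two elliptic of $\mathrm{PSL}_2(\mathbb{R})$ fixing $u + iv \in \mathcal{H}$ is represented by $\frac{1}{v}\mat{u}{-(u^2+v^2)}{1}{-u}$, which has determinant one, trace zero, and lower-left entry $1/v$. By Proposition \ref{pssiFunDomThm}, $\{E,F,G\}$ generates $\Gamma$, we have $S^{a}E = GF$, and a highest-lifting segment of the simple closed geodesic in question (whose highest lift is the axis of $S^{a}E$) runs from the fixed point $f$ of $F$ to the fixed point $g$ of $G$. Conjugating $\Gamma$ by a real translation---which does not disturb $S^{a}$ and preserves all heights---I normalize so that $E$ fixes $i/z$, where $1/z$ is henceforth the height of the fixed point $e$ of $E$; write $f = \phi + i/y$ and $g = \psi + i/x$, so that $1/y$ and $1/x$ are the heights of $f$ and $g$.

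For the adjusted Fricke equation I would expand the product $GF$ of the two order-two elliptic matrices and equate it with $S^{a}E = \mat{az}{-1/z}{z}{0}$. Matching the $(2,1)$, $(2,2)$ and $(1,1)$ entries solves out $\phi = -x/(yz)$ and $\psi = a + y/(xz)$ and leaves one consistency relation (equivalently, equality of the $(1,2)$ entries, which is then automatic from $\det = 1$); clearing denominators turns that relation into precisely $x^{2}+y^{2}+z^{2} = a\,xyz$, which is \eqref{AdjFrickeEq}. As a byproduct the sum of the diagonal entries of $GF$ is $az$, recovering the trace statement quoted just before the corollary.

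The assertion $z = \max\{x,y,z\}$ is the one step that is not a mechanical computation: I would read it off from the hexagonal fundamental domain of Proposition \ref{pssiFunDomThm}, noting that among $e, f, g$ the fixed point $e$---the one belonging to the elliptic $E$ that factors the hyperbolic $S^{a}E$ carrying the \emph{highest} lift of the geodesic---sits lowest, so its height $1/z$ is the least of the three and $z$ the largest. (This is the hyperbolic incarnation of Cohn's observation that the distinguished coordinate of a Markoff-type triple is its maximum; it can also be obtained from the explicit real parts $\phi,\psi$ found above together with the embeddedness of the hexagon.)

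The two height formulas are then short trace-and-entry computations, using that the axis of a hyperbolic $\mat{p}{q}{r}{s}$ is the Euclidean semicircle through its two fixed points, of radius---hence of Euclidean height---equal to $\sqrt{\mathrm{tr}^{2}-4}\,/\,(2|r|)$. For $S^{a}E$ one has $\mathrm{tr} = az$ and lower-left entry $z$, so the height is $\sqrt{a^{2}z^{2}-4}/(2z) = \sqrt{a^{2}/4 - 1/z^{2}} = r_{a}(z)$. For $ES^{a}ES^{-a}$---which is hyperbolic, being the parabolic $ES^{a}E^{-1}$ (a conjugate of $S^{a}$) followed by $S^{-a}$, and whose axis need not project to a \emph{simple} geodesic---one computes $\mathrm{tr} = \pm(a^{2}z^{2}+2)$ and lower-left entry $\pm a z^{2}$, whence the height is $\sqrt{(a^{2}z^{2}+2)^{2}-4}\,/\,(2az^{2}) = \sqrt{a^{2}z^{2}(a^{2}z^{2}+4)}\,/\,(2az^{2}) = \sqrt{a^{2}/4 + 1/z^{2}} = R_{a}(z)$. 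The principal obstacle, as noted, is pinning down $z = \max$; everything else is bookkeeping, chiefly the normalization that keeps the symbol $z$ consistent across the three places it occurs.
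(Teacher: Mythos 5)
The paper's ``proof'' of Corollary~\ref{gotTriples} is a bare citation to Theorems~2 and~3 of \cite{ssEG}, plus the remark that the reduction $y\ge x$ used there is special to the modular case. So your proposal is not a reconstruction of an argument that appears in this paper; you are reconstructing the content of the cited theorems. Your framework --- parameterize each order-two elliptic by its fixed point via the trace-zero matrix $\frac{1}{v}\mat{u}{-(u^2+v^2)}{1}{-u}$, expand $GF$, compare with $S^{a}E$, and extract heights of hyperbolic axes from $\sqrt{\mathrm{tr}^2-4}/(2|c_{21}|)$ --- is exactly the right one and is what produces the explicit matrices of Corollary~\ref{slideMats}; the two trace-and-entry computations for $r_a(z)$ and $R_a(z)$ both check out.

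Two caveats. First, a sign slip: the relation $S^{a}E = GF$ holds in $\mathrm{PSL}(2,\mathbb R)$, and with your (positive-lower-left) $\mathrm{SL}(2,\mathbb R)$ representatives the lift is actually $GF = -S^{a}E$; tracking this gives $\phi = +x/(yz)$ and $\psi = a - y/(xz)$, not $\phi = -x/(yz)$, $\psi = a + y/(xz)$. With your stated signs the consistency relation would come out as $x^{2}+y^{2}+z^{2} = -axyz$, impossible for positive $x,y,z$; with the corrected signs one obtains precisely $axyz = x^2+y^2+z^2$, as you intended. (A quick sanity check against Corollary~\ref{slideMats}: the fixed point of $E_{1}=\mat{x/z}{*}{y}{-x/z}$ is $(x/z+i)/y = x/(yz)+i/y$, so $\phi=+x/(yz)$.) Second, as you yourself flag, $z = \max\{x,y,z\}$ is the one genuinely non-computational claim, and your appeal to the hexagon of Proposition~\ref{pssiFunDomThm} is a heuristic, not an argument: it is not immediate from the hexagon's embeddedness that $e$ is the lowest of the three fixed points, and the paper itself offloads exactly this point (together with a hypothesis on $a$, cf.\ the remark inside the proof of Proposition~\ref{theMoves} that ``our hypotheses on $a$ imply that $FGF$ has the lowest fixed point\ldots'') to the cited source. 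So the structure of your proof is correct and matches what the citation must contain, but you should fix the $\mathrm{SL}/\mathrm{PSL}$ sign bookkeeping and supply an actual argument, not just a gesture, for $z = \max\{x,y,z\}$.
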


\begin{proof}   This follows from Theorems 2 and 3 (and their proofs) of \cite{ssEG}, where we give detailed proofs in the modular case.    The only aspect of the proof given there that does not hold in general is that by use of the  map $w \mapsto -\bar{w}$, in the modular case one can further assume that $y\ge x$.
\end{proof}

\begin{Cor}\cite{scgArt}\label{slideMats}  Let $E, F, G$ be as above.  Then there is a real translation conjugating the triple $E, F, G$ to 
\[
E_{0}= \mat{0}{*}{z}{0}, \;  E_{1}= \mat{x/z}{*}{y}{-x/z},
\; E_{2} = \mat
{ax-y/z}{*}{x}{-ax+y/z}\;.
\]
\end{Cor}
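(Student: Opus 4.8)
The plan is to produce an explicit real translation $z \mapsto z + t$ that simultaneously conjugates the triple $E, F, G$ of Corollary~\ref{gotTriples} into the normal form $E_0, E_1, E_2$; in fact the natural guess is to translate so that the fixed point of $E$ moves to the imaginary axis, after which everything is forced by Corollary~\ref{gotTriples}. First I would recall that a matrix of determinant one, trace zero, is (up to sign) determined by its fixed point $p + iq \in \mathcal{H}$: it has the form $\bigl(\begin{smallmatrix} * & * \\ 1/q & * \end{smallmatrix}\bigr)$ with diagonal entries $\pm p/q$ and lower-left entry $1/q$ once we normalize the lower-left entry to be positive. So from Corollary~\ref{gotTriples}, after conjugating $E$ by the translation carrying its fixed point onto the imaginary axis, $E$ becomes an elliptic of order two fixing $i/z$ (height $1/z$, real part $0$), which is exactly $E_0 = \bigl(\begin{smallmatrix} 0 & * \\ z & 0 \end{smallmatrix}\bigr)$ — here the $(2,1)$-entry is $1/(1/z) = z$ and the off-diagonal sign convention fixes the $*$ in the $(1,2)$-slot as $-1/z$.

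Next I would pin down $E_1$ and $E_2$. After the same translation, $F$ and $G$ have their fixed points at heights $1/y$ and $1/x$ respectively (by Corollary~\ref{gotTriples}), with some real parts that I must compute. The cleanest route is to use the relations among $E, F, G$ already recorded: they generate $\Gamma$, and Proposition~\ref{pssiFunDomThm} together with Corollary~\ref{gotTriples} tells us $S^a E = GF$ (so $GF$ has trace $az$), while the triple of heights satisfies the adjusted Fricke equation $x^2 + y^2 + z^2 = axyz$. Writing $E_1 = \bigl(\begin{smallmatrix} u & * \\ y & -u \end{smallmatrix}\bigr)$ with $u$ the unknown real part divided by height (i.e. the fixed point of $E_1$ is $(u/y) + i/y$), the trace condition on the appropriate product, combined with $\det = 1$ and the order-two conditions, yields a small polynomial system in $u$; the adjusted Fricke equation is precisely what makes $u = x/z$ the solution. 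The analogous bookkeeping for $E_2$, using the fixed point relation and the fact that $E_2 E_1 E_0$ (or the correctly ordered product matching $S^a E = GF$ and the translation $S^a = T_2 T_1 T_0$) must again have the prescribed trace, forces the real part of $E_2$'s fixed point to be $(ax - y)/(zx)$, giving $E_2 = \bigl(\begin{smallmatrix} ax - y/z & * \\ x & -ax + y/z \end{smallmatrix}\bigr)$ after clearing the height $1/x$ into the $(2,1)$-entry.

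I would organize the write-up as: (1) apply the real translation taking $E$'s fixed point to the imaginary axis and observe this yields $E_0$ verbatim; (2) note that this translation is the unique orientation-preserving real translation doing so, so it is forced; (3) compute $F \mapsto E_1$ and $G \mapsto E_2$ using the preserved relations ($S^a E = GF$, the order-two property, $\det = 1$) and the heights from Corollary~\ref{gotTriples}, invoking the adjusted Fricke equation~\eqref{AdjFrickeEq} to simplify the entries into the stated closed forms. Most of this is a finite matrix computation already essentially contained in \cite{scgArt} and \cite{ssEG}, so the proof can reasonably be stated as a corollary and dispatched by reference together with the translation remark.

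The main obstacle is bookkeeping rather than conceptual: one must get the order of the product and the sign conventions exactly right so that $E_1, E_2$ come out with the listed entries rather than, say, conjugates or inverses, and one must verify that the single real translation that normalizes $E$ also normalizes $F$ and $G$ to the claimed forms — i.e. that no further conjugation (by a rotation or a second translation) is needed. This works only because the heights of all three fixed points and the traces of the relevant products are already fixed by Corollary~\ref{gotTriples}; the real parts are then the only remaining degrees of freedom, and the group relations together with the adjusted Fricke equation determine them uniquely. So the key step to get right is the simultaneity claim, and the natural way to secure it is to exhibit the translation explicitly (it is $z \mapsto z - \mathrm{Re}(e)$, where $e$ is the fixed point of $E$) and then just read off the three conjugated matrices.
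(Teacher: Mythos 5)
Your proposal is correct and follows essentially the same route the paper (which simply defers to the proof of Theorem~2 of \cite{ssEG}) intends: translate $E$'s fixed point to the imaginary axis to obtain $E_0$ from the height $1/z$, then use the preserved relation $GFE = S^{a}$, the heights $1/y$ and $1/x$ from Corollary~\ref{gotTriples}, and the adjusted Fricke equation~\eqref{AdjFrickeEq} to determine the real parts of the conjugated $F$ and $G$, yielding $E_1$ and $E_2$. One small slip worth correcting: the real part of $E_2$'s fixed point is $a - y/(xz)$, not $(ax-y)/(zx)$, though the matrix you state at the end is nonetheless the right one.
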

\begin{proof}  This follows  as in the proof of Theorem 2 of \cite{ssEG}. 
\end{proof} 

\section{Basic Tool: Uplift Regions}\label{uplift} 

 The following elementary result is key to our approach.

\begin{Lem}\label{basicHypers}   If $A = \begin{pmatrix} \alpha&\beta\\ \gamma & -\alpha\end{pmatrix}$  is in $\text{SL}(2, \mathbb R)$, then  $A$ increases the height of any h-line  with apex $(X,Y)$ satisfying $|\, (X -\alpha/\gamma)^2 - Y^2\,|< 1/\gamma^2$. 
\end{Lem}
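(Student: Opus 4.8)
The plan is to reduce the assertion to the elementary Euclidean geometry of semicircles together with the standard cross-ratio identity for $\mathrm{SL}(2,\mathbb R)$. First I would record the normalization. Since $\det A = -\alpha^2 - \beta\gamma = 1$, over $\mathbb R$ we cannot have $\gamma = 0$, so $\alpha/\gamma$ is well defined. A non-vertical h-line is a Euclidean semicircle meeting $\mathbb R$ orthogonally; if its endpoints on $\mathbb R$ are $p$ and $q$, then its apex is $\bigl(\tfrac{p+q}{2},\tfrac{q-p}{2}\bigr)$, and conversely the apex $(X,Y)$ corresponds to the endpoint pair $p = X - Y$, $q = X + Y$, so in particular $\lvert p - q\rvert = 2Y$.

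Next I would use that the M\"obius action $z \mapsto (\alpha z + \beta)/(\gamma z - \alpha)$ of $A$ sends h-lines to h-lines and carries boundary endpoints to boundary endpoints, so the image h-line has endpoints $A(p)$ and $A(q)$. Provided the image is again non-vertical, i.e.\ neither $p$ nor $q$ equals $\alpha/\gamma$, its apex height is $\tfrac12\lvert A(p) - A(q)\rvert$. The key computation is the cross-ratio identity: clearing denominators and using $\det A = 1$,
\[
A(p) - A(q) \;=\; \frac{(\alpha p + \beta)(\gamma q - \alpha) - (\alpha q + \beta)(\gamma p - \alpha)}{(\gamma p - \alpha)(\gamma q - \alpha)} \;=\; \frac{(\det A)\,(p - q)}{(\gamma p - \alpha)(\gamma q - \alpha)} \;=\; \frac{p - q}{(\gamma p - \alpha)(\gamma q - \alpha)}.
\]
Hence the image apex has height $\tfrac12\lvert p - q\rvert\big/\bigl(\lvert\gamma p - \alpha\rvert\,\lvert\gamma q - \alpha\rvert\bigr)$, and this strictly exceeds $Y = \tfrac12\lvert p - q\rvert$ precisely when $\lvert\gamma p - \alpha\rvert\,\lvert\gamma q - \alpha\rvert < 1$.

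Finally I would substitute $p = X - Y$, $q = X + Y$ and factor $\gamma$ out of each factor: $\lvert\gamma p - \alpha\rvert\,\lvert\gamma q - \alpha\rvert = \gamma^2\,\bigl\lvert\bigl((X - \alpha/\gamma) - Y\bigr)\bigl((X - \alpha/\gamma) + Y\bigr)\bigr\rvert = \gamma^2\,\bigl\lvert(X - \alpha/\gamma)^2 - Y^2\bigr\rvert$, so the condition for the apex height to increase becomes exactly $\bigl\lvert(X - \alpha/\gamma)^2 - Y^2\bigr\rvert < 1/\gamma^2$, which is the hypothesis. The only loose end is the degenerate case in which an endpoint equals $\alpha/\gamma$ (equivalently $(X - \alpha/\gamma)^2 = Y^2$, so the strict hypothesis still holds) and the image h-line is vertical; there the image apex should be regarded as lying at height $\infty$, so the conclusion holds a fortiori. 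I do not anticipate a genuine obstacle: the proof is the cross-ratio identity plus the identification of the apex with the Euclidean half-diameter, and the only points needing care are the direction of the inequality and this vertical-image boundary case.
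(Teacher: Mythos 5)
Your proof is correct. The underlying computation is the same as the paper's, but the route differs in one respect: the paper first works out explicitly how the inversion $T_0\colon z\mapsto -1/z$ transforms a circle $C(c,r)$ of real center $c$ and radius $r$ (obtaining $C(-c/(c^2-r^2),\, r/\lvert c^2-r^2\rvert)$, hence radius increases iff $\lvert c^2-r^2\rvert<1$), and then transports the conclusion to general $A$ by conjugating with the affine map $w\mapsto \gamma w-\alpha$, which uniformly rescales Euclidean lengths. You instead compute directly for general $A$ via the boundary endpoints $p=X-Y$, $q=X+Y$ and the identity $A(p)-A(q)=(p-q)\big/\bigl((\gamma p-\alpha)(\gamma q-\alpha)\bigr)$ (using $\det A=1$), which immediately gives the ratio of the image apex height to $Y$ as $1/(\lvert\gamma p-\alpha\rvert\,\lvert\gamma q-\alpha\rvert)$, and then rewrites the condition in terms of $(X,Y)$. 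Both are two-line arguments; the paper's conjugation reduction isolates the special case and is reused conceptually (e.g.\ in Lemma~\ref{hitOrPerp}, whose proof again says "it suffices to treat $A=T_0$"), while your direct endpoint computation avoids the conjugation step and makes the role of $\det A=1$ explicit. Your handling of the boundary case where an endpoint hits $\alpha/\gamma$ is a reasonable extra precaution, though under the strict hypothesis that case forces $(X-\alpha/\gamma)^2=Y^2$, so it does not in fact occur.
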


\begin{proof}
 We first note that $T_{0}: z \mapsto -1/z$ takes
$C(c, r)$, the circle of real center $c$ and radius $r$, to
$C(\, -c/(c^2-r^2),  r/\vert\,c^2-r^2\,\vert\,)$.     Thus, this element increases
radii whenever $\vert \,  c^2-r^2\,\vert < 1$.    Now, an h-line of apex $(X_{0}, Y_{0})$ has center $X_{0}$ and radius $Y_{0}$.   Thus,  the element $T_{0}$ increases heights for all h-lines of apex of coordinate $(X, Y)$ with $\vert \,  X^2-Y^2\,\vert < 1$.
      
The fixed point of  $A$ is $w = (\alpha + i)/\gamma$.   Since 
$w \mapsto  \gamma w - \alpha$ on $\mathcal H$  respects relative size, 
$A$ increases radii for h-lines of apex $w=X+i Y$ with $|\, (X-\alpha/\gamma)^2 - Y^2\,|< 1/\gamma^2$.
\end{proof}

\subsection{Uplift Regions Defined}
\begin{Deff}  For $A = \begin{pmatrix} \alpha&\beta\\ \gamma & -\alpha\end{pmatrix}$  in $\text{SL}(2, \mathbb R)$,   the {\em uplift region} of $A$, $\mathcal U(A)$,  is the subset of $(X, Y) \in \mathcal H$ such that  
$|\, (X-\alpha/\gamma)^2 - Y^2\,|< 1/\gamma^2$.   We let $\mathcal U_{-}(A)$ denote the elements of the  uplift region of $A$ with $X<\alpha/\gamma$, and $\mathcal U_{+}(A)$ denote the remaining elements.      Finally, we call 
\begin{itemize} 
\item $\{\, (X, Y)\,|\, (X-\alpha/\gamma)^2 - Y^2 = -1/\gamma^2\}$ the {\em upper boundary} of $\mathcal U(A)$, and 
\item $\{\, (X, Y)\,|\, (X-\alpha/\gamma)^2 - Y^2\,= 1/\gamma^2\}$ the {\em lower boundary} of $\mathcal U(A)\,$.
\end{itemize}
 See Figure \ref{upliftBasicsFig}. 
\end{Deff}
\bigskip 

\begin{figure}
\begin{center}
\includegraphics{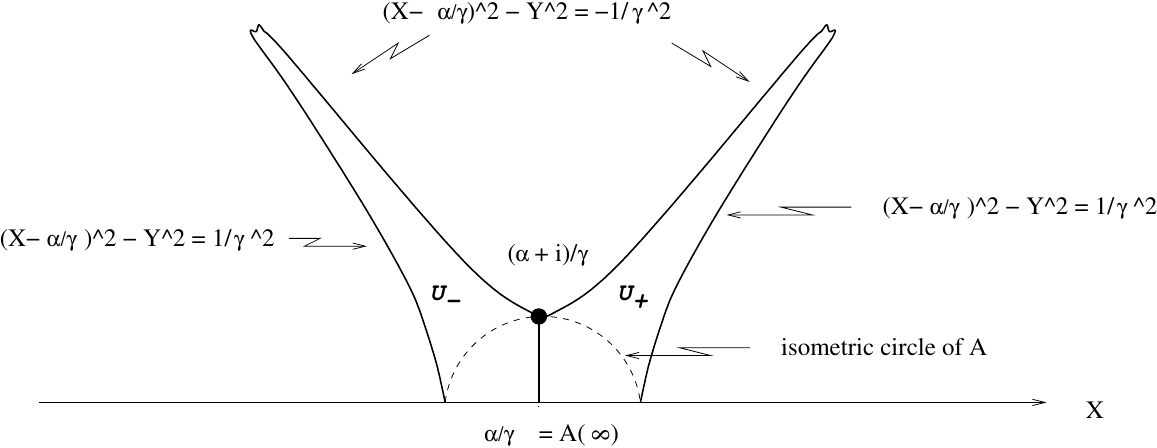}
\caption{Order two $A$ increases heights of h-lines with apex in uplift region $\mathcal U(A) = \mathcal U_{-}(A)  \cup \mathcal U_{+}(A)\;$.}
\label{upliftBasicsFig}
\end{center}
\end{figure}

\bigskip 
 
Recall that the {\em isometric circle} of an element of $\begin{pmatrix} \alpha&\beta\\ \gamma &\delta \end{pmatrix} \in \text{SL}(2, \mathbb R)$ with $\gamma \neq 0$  is the circle of center $-\delta/\gamma$ and radius $1/|\gamma|$.     The isometric circle of an order two elliptic element $A$ (as above) is inscribed in the uplift region,  with points of intersection at  the fixed point of $A$ and at two (ideal) points on the real axis.  The elliptic $A$ acts so as to send its isometric circle to itself, by reflection through the vertical line passing through the fixed point.

\begin{Lem}\label{hitOrPerp}   Suppose that $A = \begin{pmatrix} \alpha&\beta\\ \gamma & -\alpha\end{pmatrix}$  is in $\text{SL}(2, \mathbb R)$ and $\ell$ is an h-line.  Then  $A$ preserves the height of $\ell$ if and only if $\ell$ either passes through the fixed point of $A$, or else $\ell$ meets perpendicularly the isometric circle of $A$.   In the first of these cases, the apex of $\ell$ lies on the upper boundary of $\mathcal U(A)$; in the second, this apex lies on the lower boundary.  
\end{Lem}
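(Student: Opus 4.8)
The plan is to reduce everything to the two model cases already isolated in the proof of Lemma \ref{basicHypers}: the element $T_0\colon z\mapsto -1/z$, and then transfer by the affine map $w\mapsto \gamma w-\alpha$ that conjugates a general order-two $A$ to (a scalar multiple of) $T_0$. So first I would record that, writing $C(c,r)$ for the h-line of apex $(c,r)$, one has $T_0(C(c,r))=C(-c/(c^2-r^2),\,r/|c^2-r^2|)$; hence $T_0$ fixes the radius of $C(c,r)$ precisely when $|c^2-r^2|=1$, and one splits this into $c^2-r^2=-1$ (apex on $\{X^2-Y^2=-1\}$) and $c^2-r^2=1$ (apex on $\{X^2-Y^2=1\}$). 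This is the skeleton of the equivalence; the remaining work is to match the two sides of "$c^2-r^2=\pm1$" with the two geometric conditions "$\ell$ passes through the fixed point of $T_0$" and "$\ell\perp$ isometric circle of $T_0$".

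For $T_0$ the fixed point is $i$ and the isometric circle is $C(0,1)$ (center $0$, radius $1$). An h-line $C(c,r)$ passes through $i$ iff $c^2+(1-r)^2=r^2$, i.e.\ $c^2-r^2=-1+2r-2=\cdots$; cleaner: $|c-i|=r$ squares to $c^2+1=r^2$, that is $c^2-r^2=-1$. And $C(c,r)$ meets $C(0,1)$ orthogonally iff $c^2=1+r^2$ (the Pythagorean/orthogonality relation for two circles with real centers), i.e.\ $c^2-r^2=1$. So the two algebraic subcases are exactly the two geometric subcases, and the apex-on-upper-boundary / apex-on-lower-boundary assignment falls out immediately from the Definition of the boundaries of $\mathcal U(T_0)$. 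That settles the lemma for $A=T_0$.

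Now I transfer. For general $A=\begin{pmatrix}\alpha&\beta\\\gamma&-\alpha\end{pmatrix}\in\text{SL}(2,\mathbb R)$ with $\gamma\ne0$, the M\"obius map $\varphi\colon w\mapsto \gamma w-\alpha$ is an orientation-preserving similarity of $\mathcal H$, so it carries h-lines to h-lines, preserves ratios of hyperbolic lengths and in particular preserves the property "$A$ preserves the height of $\ell$" $\iff$ "$\varphi A\varphi^{-1}$ preserves the height of $\varphi(\ell)$". One checks $\varphi A\varphi^{-1}$ is the order-two element $z\mapsto -1/z$ up to the scalar I will track by a direct computation; in any case it is conjugate to $T_0$ by a real translation, so the model case applies. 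Under $\varphi$ the fixed point of $A$, namely $(\alpha+i)/\gamma$, goes to $i$, the isometric circle of $A$ (center $-\delta/\gamma=\alpha/\gamma$, radius $1/|\gamma|$) goes to $C(0,1)$, the upper boundary $\{(X-\alpha/\gamma)^2-Y^2=-1/\gamma^2\}$ goes to $\{X^2-Y^2=-1\}$, and the lower boundary goes to $\{X^2-Y^2=1\}$; also passing-through-a-point and meeting-a-circle-perpendicularly are similarity-invariant. Composing the model-case equivalence with $\varphi$ then gives exactly the statement for $A$.

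The only genuinely delicate point I anticipate is bookkeeping the scalar: $\varphi A\varphi^{-1}$ need not be literally $\begin{pmatrix}0&-1\\1&0\end{pmatrix}$, but a conjugate of it by a real translation (the factor $w\mapsto\gamma w-\alpha$ may leave a residual translation, cf.\ the center $\alpha/\gamma$ of the isometric circle versus the center $0$ of $C(0,1)$). This is handled exactly as in the last paragraph of the proof of Lemma \ref{basicHypers}: one first translates so that the isometric circle is centered at the origin, then the similarity is a pure scaling, and a pure scaling $w\mapsto \lambda w$ ($\lambda>0$) turns $A$ into precisely $T_0$ while preserving all the relevant projective data. So the obstacle is purely notational, and the argument is otherwise a short computation plus invariance under similarities. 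I would present it in the order: (i) the $T_0$ model computation for "passes through $i$" ($c^2-r^2=-1$) and "orthogonal to $C(0,1)$" ($c^2-r^2=1$); (ii) deduce the $T_0$ case of the lemma from Lemma \ref{basicHypers} and the Definition of the boundaries; (iii) conjugate a general $A$ to $T_0$ by a real similarity and invoke invariance of all conditions involved.
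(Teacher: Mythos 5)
Your proposal is correct and takes essentially the same route as the paper: restrict via Lemma~\ref{basicHypers} to apexes on the boundary of $\mathcal U(A)$, reduce to $A = T_0$ by the affine conjugation $w \mapsto \gamma w - \alpha$, and then do the elementary geometry ($c^2 - r^2 = -1$ for passing through $i$; $c^2 - r^2 = 1$ for orthogonality with $C(0,1)$). The only difference is that you fill in the ``straightforward exercise'' the paper leaves to the reader, and the scalar bookkeeping you flag as delicate is in fact automatic: since $\det A = -\alpha^2 - \beta\gamma = 1$, the conjugate $\varphi A \varphi^{-1}$ equals $\begin{pmatrix}0 & -\gamma \\ \gamma & 0\end{pmatrix}$, a scalar multiple of $\begin{pmatrix}0&-1\\1&0\end{pmatrix}$ and hence exactly $T_0$ as a M\"obius transformation.
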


\begin{proof}    From Lemma  \ref{basicHypers}, the height is preserved exactly for $\ell$ of apex on the boundary of $\mathcal U(A)$.      To identify the geometry associated to apexes on the components of this boundary, it again suffices to treat the special case of $A = T_{0}$.  This is then a straightforward exercise, easily performed using at most elementary calculus.
\end{proof}

\subsection{Uplift Regions and Translations}\label{upliftRegTransSect}

 Our main application of uplifting is in the setting of triples of elliptic elements of order two whose product is a translation.  

\begin{Prop}\label{meetIsoCirc}   Suppose that $A$, $B$ and $C$ are distinct elliptic elements of order two   such that the product  $ABC$ is a translation.     Then 
  the axis of $AB$ meets perpendicularly the isometric circle of $C$.
\end{Prop}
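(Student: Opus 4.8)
The plan is to reduce the statement to two elementary facts: that $AB$ and $C$ share the \emph{same} isometric circle, and that the axis of any hyperbolic element of $\text{SL}(2,\mathbb R)$ meets that element's own isometric circle perpendicularly. Together these give the claim at once, since then the axis of the (hyperbolic) element $AB$ meets the isometric circle of $AB$, which is the isometric circle of $C$, at right angles.

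For the first fact, I use that $ABC$ is a translation fixing $\infty$: write $ABC = S$ with $S = \begin{pmatrix} 1 & t \\ 0 & 1\end{pmatrix}$ and $C = \begin{pmatrix} \alpha & \beta \\ \gamma & -\alpha \end{pmatrix}$. Here $\gamma \neq 0$ because $-\alpha^2 - \beta\gamma = 1$ forces $\beta\gamma \neq 0$, so $\mathcal U(C)$ and the isometric circle of $C$ are defined; the latter has center $\alpha/\gamma$ and radius $1/|\gamma|$. Since $\det C = 1$, $C^{-1} = \begin{pmatrix} -\alpha & -\beta \\ -\gamma & \alpha \end{pmatrix}$, and $AB = S\,C^{-1}$ therefore has bottom row $(-\gamma,\ \alpha)$. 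Hence the isometric circle of $AB$ has center $-\alpha/(-\gamma) = \alpha/\gamma$ and radius $1/|\gamma|$ --- exactly the isometric circle of $C$. (Replacing the matrix $S$ by $-S$ replaces $SC^{-1}$ by $-SC^{-1}$, which has the same isometric circle, so the sign of the chosen representative is immaterial.)

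For the second fact, let $M = \begin{pmatrix} a & b \\ c & d \end{pmatrix}$ be hyperbolic, so $c \neq 0$ and $(a+d)^2 > 4$. Its fixed points on $\mathbb R$ are the roots of $c z^2 + (d-a) z - b = 0$, so (using $ad - bc = 1$) its axis is the semicircle of center $(a-d)/(2c)$ and squared radius $\bigl((a+d)^2 - 4\bigr)/(4c^2)$, while its isometric circle has center $-d/c$ and squared radius $1/c^2$. Two circles with centers on $\mathbb R$ meet orthogonally exactly when the squared distance between their centers equals the sum of the squared radii; here both quantities equal $(a+d)^2/(4c^2)$, because $\bigl((a-d)/(2c) + d/c\bigr)^2 = (a+d)^2/(4c^2)$ and $\bigl((a+d)^2 - 4\bigr)/(4c^2) + 1/c^2 = (a+d)^2/(4c^2)$. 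The same identity shows the two circles do intersect, so the intersection is a genuine perpendicular crossing in $\mathcal H$.

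There is really no hard step here: the one thing to state with care is that the isometric circle is unchanged under negating the matrix, so that $AB = SC^{-1}$ genuinely inherits its isometric circle from $C$. I would also note that an equally short route goes through Lemma~\ref{hitOrPerp}: one checks by the same kind of computation that the axis of $AB = SC^{-1}$ and the axis of the M\"obius transformation $CS$ (which represents $C(AB)C^{-1}$ because $ABC = S$) have the same height, so $C$ preserves the height of the axis of $AB$; one then excludes the alternative in Lemma~\ref{hitOrPerp} that this axis passes through the fixed point of $C$ --- otherwise $AB$ and $C(AB)C^{-1}$ would share an axis, forcing $C$ to commute with a power of $S$, impossible for an elliptic element --- and Lemma~\ref{hitOrPerp} then delivers the perpendicularity. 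I would present the first, self-contained route.
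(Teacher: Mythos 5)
Your proposal is correct, and it takes a genuinely different route from the paper. The paper argues via conjugation: writing $CBA = (ABC)^{-1} = S^{-1}$ (using that $A,B,C$ are involutions), it shows that $C$ sends the axis of $AB$ to a horizontal translate of itself, hence preserves its height; it then rules out the axis passing through the fixed point of $C$ (else $A$, $B$, $C$ and hence the translation $ABC$ would all preserve a single non-vertical h-line, which is impossible), and finally invokes Lemma~\ref{hitOrPerp} to get perpendicularity. You instead observe directly from the matrix identity $AB = SC^{-1}$, with $S$ upper unitriangular, that $AB$ and $C$ have the \emph{same} isometric circle, and then prove the clean elementary fact that a hyperbolic element's axis always crosses its own isometric circle orthogonally. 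Your route buys a slightly stronger structural observation (the shared isometric circle) and is self-contained: it does not lean on Lemma~\ref{hitOrPerp} at all, whereas the paper's proof is designed to exhibit the ``preserved height but not through the fixed point'' phenomenon that Lemma~\ref{hitOrPerp} codifies, and which recurs in their later arguments. Both arguments implicitly need $AB$ to be hyperbolic; this is automatic here, since two distinct order-two elliptics (the matrices have trace $0$ and square to $-I$) necessarily have distinct fixed points in $\mathcal H$, and the product of two half-turns about distinct points is a hyperbolic translation along the h-line joining them. Your alternative sketch at the end (the ``second route'') is essentially the paper's argument; your exclusion of the passing-through case is phrased a bit loosely, but the main route you present is airtight and preferable.
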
 

\begin{proof} We first show that $C$ fixes the height of the axis of $AB$.  Suppose $w$ lies on this axis, then $BA w$ does as well.   Now, $C( BA w) = (ABC)^{-1}w$ and thus we find that the image of the axis of $AB$ under $C$ is simply a translation of itself.  

The axis of $AB$ passes through the fixed point of each of $A$ and of $B$.   If it also passes through the fixed point of $C$, then each of $A$, $B$ and $C$ send  this axis to itself.    But,  the translation $ABC$ cannot send any h-line to itself.   Therefore, in fact the axis of $AB$ cannot pass through the fixed point of $C$.

Since $C$ fixes the height of the axis of $AB$, but this axis does not pass through the fixed point of $C$,   by Lemma \ref{hitOrPerp} we conclude that the axis of $AB$ meets perpendicularly the isometric circle of $C$.
\end{proof}

\bigskip 

\begin{Deff}  If $A, B \in \text{SL}(2, \mathbb R)$ are elliptic elements of order two, let  $\text{ap}(AB)$ denote the apex of the 
h-line  passing through their fixed points.  (Note that $\text{ap}(AB) = \text{ap}(BA)$, an ambiguity that causes no harm in what follows.) 
\end{Deff}

\begin{figure}
\begin{center}
\includegraphics{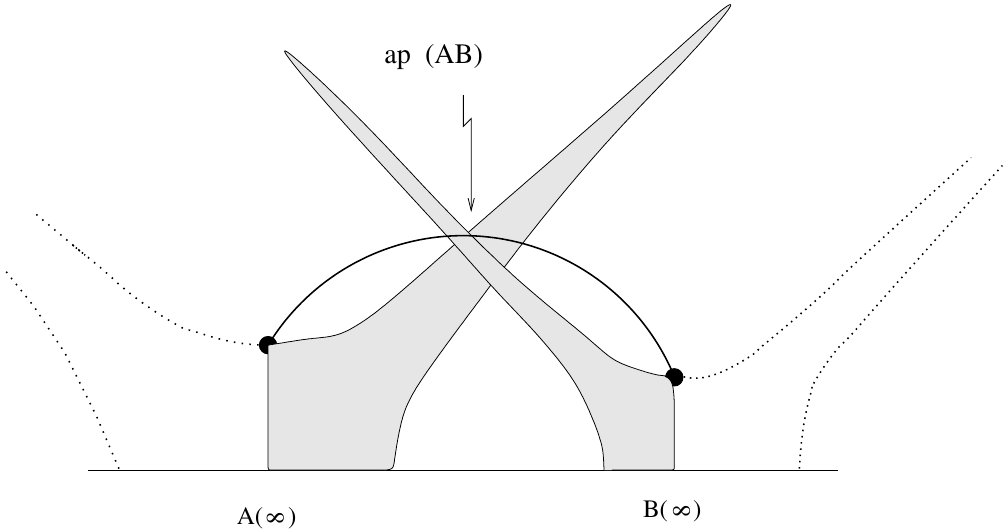}
\caption{Two order two elements:  h-line joining fixed points and bicorn  region.}
\label{fgAdjacentFig}
\end{center}
\end{figure}

For ease of discussion, we will say that uplift regions of two order two elements {\em bifurcate} at a point $p$ if the upper boundaries of these regions intersect at $p$.
  Thus, with $A, B$ as above,  their uplift regions bifurcate at $\text{ap}(AB)$.

\begin{Cor}\label{lowBdry}   Suppose that $A$, $B$ and $C$ are distinct elliptic elements of order two   such that the product  $ABC$ is a translation.     Then $\text{ap}(BC)$ lies on the intersection of the lower boundaries of the uplift regions of $A$ and of $CBABC$.
\end{Cor}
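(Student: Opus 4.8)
The plan is to apply Proposition \ref{meetIsoCirc} twice, to two different triples of order two elements whose product is a translation, and to read off from Lemma \ref{hitOrPerp} that meeting an isometric circle perpendicularly forces the relevant apex onto a \emph{lower} boundary. First I would record that $\text{ap}(BC)$ is the apex of the h-line $\ell$ through the fixed points of $B$ and $C$; this h-line is the axis of $BC$. The two claims to prove are (i) $\ell$ meets perpendicularly the isometric circle of $A$, and (ii) $\ell$ meets perpendicularly the isometric circle of $CBABC$. Given (i) and (ii), Lemma \ref{hitOrPerp} immediately places $\text{ap}(BC)$ on the lower boundary of $\mathcal U(A)$ and on the lower boundary of $\mathcal U(CBABC)$, which is the assertion.

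For (i), the triple $(B, C, A)$ has product $BCA$; since $ABC$ is a translation and translations form a subgroup closed under conjugation, $BCA = A^{-1}(ABC)A$ is also a translation. The three elements $B$, $C$, $A$ are distinct elliptics of order two, so Proposition \ref{meetIsoCirc} applies: the axis of $BC$ meets perpendicularly the isometric circle of $A$. That is exactly (i), after noting the axis of $BC$ is $\ell$.

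For (ii), I want a triple whose first two entries have product (an h-line with the same axis as) $BC$ and whose third entry is $CBABC$, with the product of all three a translation. Take the triple $(C, B, \,CBABC)$: each entry is elliptic of order two (conjugates and products-of-odd-length of order two elliptics behave well here — note $CBABC = (CB)A(CB)^{-1}$ is conjugate to $A$, hence order two elliptic), and they are distinct provided the fixed point of $CBABC$ differs from those of $C$ and $B$, which follows because $CBABC$ is conjugate to $A$ whose fixed point is distinct from the others (if it coincided, the argument in the proof of Proposition \ref{meetIsoCirc} would again contradict that a translation fixes no h-line). The product is $C\cdot B\cdot CBABC = (CB)(CBA)(BC)^{-1}$ wait — more directly, $C B (CBABC) = (CB)^2 ABC = (CB)\,(ABC)\,? $ — I would instead compute $CB\cdot CBABC$ and observe it equals a conjugate of $ABC$ by $CB$, hence a translation; the axis of the product $C\cdot B$ is still $\ell$. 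Then Proposition \ref{meetIsoCirc} gives that $\ell$ meets perpendicularly the isometric circle of $CBABC$, which is (ii).

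The main obstacle is the bookkeeping in (ii): getting the group-theoretic identity that exhibits $(C,B,CBABC)$ (or whichever ordered triple is correct) as a triple of distinct order two elliptics with product a translation, and confirming that the relevant two-fold product has axis $\ell$. The "distinctness" and "order two" parts are routine once one notes $CBABC$ is a conjugate of $A$; the delicate point is matching the conjugating pattern so that the product telescopes to $(CB)^{\pm1}(ABC)^{\pm1}(CB)^{\mp1}$. Once that algebra is pinned down, everything else is a direct invocation of Proposition \ref{meetIsoCirc} and Lemma \ref{hitOrPerp}.
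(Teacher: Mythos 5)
Your overall strategy---apply Proposition \ref{meetIsoCirc} to two suitably chosen triples and read off the lower-boundary statement from Lemma \ref{hitOrPerp}---is exactly the paper's approach, and your part (i) is correct: the triple $(B,C,A)$ has product $BCA=A^{-1}(ABC)A$, a translation, so Proposition \ref{meetIsoCirc} gives that the axis of $BC$ meets the isometric circle of $A$ perpendicularly, and Lemma \ref{hitOrPerp} places $\text{ap}(BC)$ on the lower boundary of $\mathcal U(A)$. (The paper gets the same conclusion by noting instead that $(ABC)^{-1}=CBA$ is a translation, applying the proposition to $(C,B,A)$, and using $\text{ap}(CB)=\text{ap}(BC)$.)

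Part (ii), however, has a genuine gap in the algebra. Your proposed triple $(C,B,\,CBABC)$ has product
\[
C\cdot B\cdot CBABC \;=\; (CB)^2\,ABC,
\]
which is in general \emph{not} a translation: $(CB)^2$ is a nontrivial hyperbolic, and $(CB)^2\,ABC$ is not a conjugate of $ABC$ by $CB$ as you tentatively claimed (that conjugate would be $CB\cdot ABC\cdot BC$, which also does not collapse). The correct ordered triple is $(B,C,\,CBABC)$: using that $B$ and $C$ have order two, the product telescopes,
\[
B\cdot C\cdot CBABC \;=\; B\,(CC)\,BABC \;=\; (BB)\,ABC \;=\; ABC,
\]
which is the given translation. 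The axis of $BC$ is still $\ell$, the element $CBABC=(CB)\,A\,(CB)^{-1}$ is elliptic of order two, and distinctness from $B$ and $C$ follows as in the proof of Proposition \ref{meetIsoCirc} (a translation fixes no h-line). With this replacement, Proposition \ref{meetIsoCirc} applied to $(B,C,CBABC)$ shows $\ell$ meets the isometric circle of $CBABC$ perpendicularly, and Lemma \ref{hitOrPerp} puts $\text{ap}(BC)$ on the lower boundary of $\mathcal U(CBABC)$. This is precisely what the paper's terse ``replacing $A$ by $CBABC$ and $BC$ by its inverse $CB$'' is encoding; once the ordering in the second triple is corrected, your argument matches the paper's.
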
 

\begin{proof}   That $\text{ap}(BC)$ lies on the intersection of the lower boundary of the uplift region of $A$ follows by taking inverses and applying  Proposition \ref{meetIsoCirc}  and Lemma \ref{hitOrPerp}.     To show that this apex lies on the lower boundary of the uplift region of $CBABC$,  we can repeat the above, after replacing $A$  
 by  $CBABC$ and $BC$ by its inverse $CB$.
  \end{proof}

\subsection{ Translates of Uplift Regions}

\begin{Deff}  Fix a real number $a >0$.  For $z >2/a$, let $r_{a}(z) = \sqrt{a^{2}/4 - 1/z^{2}}$ and $R_{a}(z) = \sqrt{a^{2}/4 + 1/z^{2}}$.
\end{Deff}   

The final statement of the following Lemma strengthens Corollary \ref{lowBdry} in this setting.

\begin{Lem}\label{plusAlphaQoverlap}  Suppose  $A  \in \text{SL}(2, \mathbb R)$, with  $A = \begin{pmatrix} \alpha&\beta\\ \gamma & -\alpha\end{pmatrix}$ and $S^{a}$ is the translation by $a>0$.  Then  the lower boundaries of $\mathcal U_{+}(A)$ and $\mathcal U_{-}(S^{\alpha} A S^{-\alpha})$ meet at  $(X, Y) = (a/2 + \alpha/\gamma, r_{a}(\gamma)\,)$, while their upper boundaries meet at  $(X, Y) = (a/2 +  \alpha/\gamma, R_{a}(\gamma)\,)\,$.    Furthermore, if $S^{a} A$ is hyperbolic, then 
the apex of its axis lies at the point of intersection of  the lower boundaries of  $\mathcal U_{+}(A)$ and $\mathcal U_{-}(S^{a} A S^{-a})$; similarly,  $\text{ap}(S^{a} A S^{-a}A) = (a/2 +  \alpha/\gamma, R_{a}(\gamma)\,)\,$.
\end{Lem}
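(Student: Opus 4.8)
I would split the statement into its two assertions and dispatch the one about $\text{ap}(S^{a}AS^{-a}A)$ first, as a formal consequence of what has already been set up. Recall the observation recorded just after Proposition~\ref{meetIsoCirc}: for any two order‑two elliptics $P,Q$ the uplift regions $\mathcal U(P)$ and $\mathcal U(Q)$ bifurcate at $\text{ap}(PQ)$, i.e.\ the upper boundaries of $\mathcal U(P)$ and $\mathcal U(Q)$ both pass through $\text{ap}(PQ)$ (indeed the h‑line realizing $\text{ap}(PQ)$ runs through the fixed point of each of $P$ and $Q$, and by Lemma~\ref{hitOrPerp} the upper boundary of $\mathcal U(P)$ is exactly the locus of apexes of h‑lines through the fixed point of $P$). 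Applying this with $P=A$, $Q=S^{a}AS^{-a}$, the point $\text{ap}(S^{a}AS^{-a}A)$ lies on the upper boundaries of both $\mathcal U(A)$ and $\mathcal U(S^{a}AS^{-a})$; by the part of the Lemma already proved these meet at $(a/2+\alpha/\gamma,\,R_{a}(\gamma))$, and a one‑line computation with $(X-\alpha/\gamma)^{2}-Y^{2}=-1/\gamma^{2}$ and $(X-\alpha/\gamma-a)^{2}-Y^{2}=-1/\gamma^{2}$ shows that is their only common point. Hence $\text{ap}(S^{a}AS^{-a}A)=(a/2+\alpha/\gamma,\,R_{a}(\gamma))$.

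For the assertion about the axis of $H:=S^{a}A$, assumed hyperbolic, I would mimic the proof of Proposition~\ref{meetIsoCirc} applied not to a genuine triple but to the factorizations $A=S^{-a}H$ and $B:=S^{a}AS^{-a}=HS^{-a}$. Let $\ell$ be the axis of $H$. Since $H$ fixes $\ell$ setwise, $A(\ell)=S^{-a}(H(\ell))=S^{-a}(\ell)$, a horizontal translate of $\ell$; in particular $A$ preserves the height of $\ell$. Moreover $\ell$ cannot pass through the fixed point of $A$: otherwise the order‑two elliptic $A$ would send $\ell$ to itself, forcing $S^{-a}(\ell)=\ell$, which is impossible since no h‑line is invariant under a nontrivial translation — exactly the device used in Proposition~\ref{meetIsoCirc}. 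By Lemma~\ref{hitOrPerp} these two facts put the apex of $\ell$ on the lower boundary of $\mathcal U(A)$. Repeating verbatim with $B$ in place of $A$, using $B^{-1}=S^{a}H^{-1}$, $B^{-1}=B$ as a M\"obius map, and $H^{-1}(\ell)=\ell$, shows $B(\ell)=S^{a}(\ell)$ and that $\ell$ misses the fixed point of $B$, so the apex of $\ell$ also lies on the lower boundary of $\mathcal U(B)$.

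It remains to identify that apex. Since $\text{tr}(S^{a}A)=a\gamma$, hyperbolicity of $H$ is precisely $a|\gamma|>2$, so $a/2>1/|\gamma|$; solving $(X-\alpha/\gamma)^{2}-Y^{2}=1/\gamma^{2}=(X-\alpha/\gamma-a)^{2}-Y^{2}$ gives the unique solution $X=a/2+\alpha/\gamma$, $Y^{2}=a^{2}/4-1/\gamma^{2}=r_{a}(\gamma)^{2}$, i.e.\ the point $(a/2+\alpha/\gamma,\,r_{a}(\gamma))$ of the first assertion. Hence the apex of the axis of $S^{a}A$ is that point.

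The only genuinely delicate points are the two ``a translation fixes no h‑line'' steps, which are the heart of Proposition~\ref{meetIsoCirc} transplanted here; the rest is bookkeeping. One small care‑point is that the apex must land on the correct branches ($\mathcal U_{+}(A)$ and $\mathcal U_{-}(S^{a}AS^{-a})$) for the already‑established part of the Lemma to apply literally, but since each lower boundary is the \emph{full} hyperbola $(X-\text{center})^{2}-Y^{2}=1/\gamma^{2}$ and, when $a|\gamma|>2$, the two meet in a single point, no ambiguity arises. (A purely computational alternative is available — $\text{tr}(S^{a}A)=a\gamma$, the real fixed points of $S^{a}A$ have sum $2\alpha/\gamma+a$ and, using $\det A=1$, product $-(\beta-a\alpha)/\gamma$, from which the apex $(a/2+\alpha/\gamma,\,r_{a}(\gamma))$ falls out — but the argument above has the merit of exhibiting why this apex is forced to match the bifurcation data of the Lemma.)
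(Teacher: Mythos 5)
Your argument is correct, but it follows a genuinely different route from the paper's. The paper disposes of this Lemma with the single line ``This is a trivial computation'': since $S^{a}AS^{-a}$ has lower-left entry $\gamma$ and upper-left entry $\alpha+a\gamma$, its bounding hyperbolas are the rigid $a$-translates of those of $A$, the two displayed intersection points drop out by symmetry, and a direct check using $\det A=1$ gives the apex of the axis of $S^{a}A$ and the apex of the h-line joining the fixed points $\alpha/\gamma+i/\gamma$ and $\alpha/\gamma+a+i/\gamma$. You instead derive both apex identifications conceptually: the one for $\text{ap}(S^{a}AS^{-a}A)$ from the bifurcation observation recorded after Proposition~\ref{meetIsoCirc}, and the one for the axis of $S^{a}A$ by transplanting the very mechanism of Proposition~\ref{meetIsoCirc} --- writing $A=S^{-a}H$ and $B=S^{a}H^{-1}$ for $H=S^{a}A$, each sends the axis $\ell$ of $H$ to a horizontal translate (hence preserves its height), and $\ell$ cannot pass through either fixed point since no translation fixes an h-line --- and then invoking Lemma~\ref{hitOrPerp}. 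Your route buys something the paper's shortcut does not: it makes visible that the apex is \emph{forced} onto the boundary intersection by the same ``a translation fixes no h-line'' device that powers the rest of the paper, rather than leaving it to coordinates. The residual step you and the paper share --- pinning down the unique intersection of the two hyperbolas under the hyperbolicity condition $a|\gamma|>2$, i.e.\ $|\text{tr}(S^{a}A)|>2$ --- is unavoidable either way, and your closing parenthetical sketching the purely computational alternative is precisely what the paper means by ``trivial computation.''
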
 

\begin{proof}   This is a trivial computation. (Note that the closures of these uplift regions also meet at two points of $Y$-coordinate $\sqrt{a^{2}/4 + 1/\gamma^{4}}$.)   See Figure ~\ref{aaPlus}. 
\end{proof} 

\begin{figure}
\begin{center}
\includegraphics{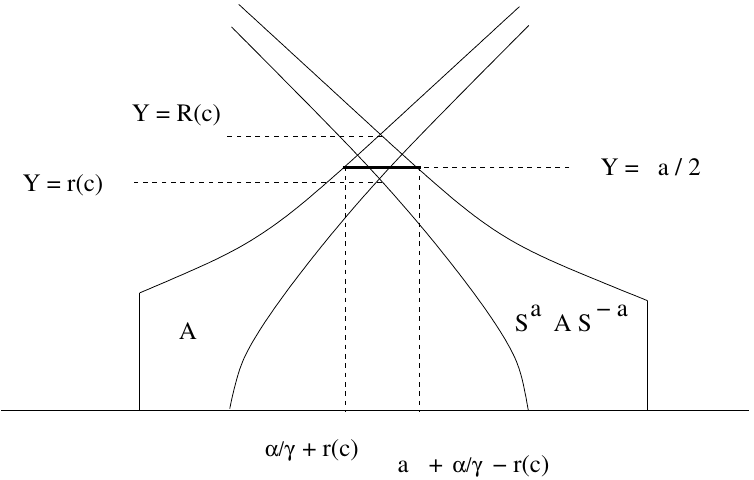}
\caption{Bicorn region for $A$ and $S^{a}AS^{-a}$   meets $Y = a/2$ in     Excision Interval  of $A$.}
\label{aaPlus}
\end{center}
\end{figure}

\begin{Deff}  With notation as above, we call the  intersection of 
 $Y = a/2$ with the union of $\mathcal U_{+}(A)$ and $\mathcal U_{-}(S^{a}A S^{-a})$ the {\em excision interval} of $A$.   
 \end{Deff}

\begin{Lem}\label{howWide}  With notation and hypotheses as above,  the  excision interval of $A$   has width $w_{a}(A) = a - 2  r_{a}(\gamma)$.
\end{Lem}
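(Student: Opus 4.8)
The plan is to carry out the whole argument on the horizontal line $Y = a/2$, which I identify with the real line through the $X$-coordinate. On that line the excision interval of $A$ is, by definition, the union of the two slices $\mathcal U_{+}(A)\cap\{Y=a/2\}$ and $\mathcal U_{-}(S^{a}AS^{-a})\cap\{Y=a/2\}$; I would compute each explicitly and then check that they fit together into a single interval of width $a - 2r_{a}(\gamma)$.

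First I would record the conjugate: a one-line matrix multiplication gives
\[
S^{a} A S^{-a} = \begin{pmatrix} \alpha + a\gamma & * \\ \gamma & -(\alpha+a\gamma)\end{pmatrix},
\]
again a traceless element, with the same parameter $\gamma$ and with $\alpha$ replaced by $\alpha + a\gamma$; hence its uplift region is the $a$-translate of $\mathcal U(A)$, centered on the vertical line $X = \alpha/\gamma + a$, and $\mathcal U_{-}(S^{a}AS^{-a})$ is the part of it with $X < \alpha/\gamma + a$. Next, assuming (as the notation $r_{a}(\gamma)$ presupposes) that $|\gamma| > 2/a$, so that $a/2 > 1/|\gamma|$, I would intersect each region with $Y = a/2$. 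Writing $u = X - \alpha/\gamma$, the condition for $(X,a/2)$ to lie in $\mathcal U(A)$ is $|\,u^{2} - a^{2}/4\,| < 1/\gamma^{2}$, i.e. $u^{2} \in (\,r_{a}(\gamma)^{2},\, R_{a}(\gamma)^{2}\,)$; since both endpoints are positive, the branch $u>0$ — namely $\mathcal U_{+}(A)$ — meets $Y = a/2$ in the open interval with endpoints $\alpha/\gamma + r_{a}(\gamma)$ and $\alpha/\gamma + R_{a}(\gamma)$. The identical computation for $S^{a}AS^{-a}$, now on the branch $X < \alpha/\gamma + a$, shows that $\mathcal U_{-}(S^{a}AS^{-a})$ meets $Y = a/2$ in the open interval with endpoints $\alpha/\gamma + a - R_{a}(\gamma)$ and $\alpha/\gamma + a - r_{a}(\gamma)$.

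Finally I would glue the two intervals. The only point with any content is the inequality $r_{a}(\gamma) + R_{a}(\gamma) < a$: squaring and using $r_{a}(\gamma)^{2}+R_{a}(\gamma)^{2} = a^{2}/2$, this reduces to $r_{a}(\gamma)R_{a}(\gamma) < a^{2}/4$, i.e. $\sqrt{a^{4}/16 - 1/\gamma^{4}} < a^{2}/4$, which is clear. This inequality places both endpoints of the first interval strictly left of the corresponding endpoints of the second, while the overlap is immediate from $2R_{a}(\gamma) > a$ (equivalently $a^{2}/4 + 1/\gamma^{2} > a^{2}/4$); hence the union is exactly the interval from $\alpha/\gamma + r_{a}(\gamma)$ to $\alpha/\gamma + a - r_{a}(\gamma)$, of width $a - 2r_{a}(\gamma)$, which is the claim. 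One could alternatively obtain the overlap for free from Lemma \ref{plusAlphaQoverlap}: the lower boundaries of the two regions cross at height $r_{a}(\gamma) < a/2$ and their upper boundaries at height $R_{a}(\gamma) > a/2$, so $Y = a/2$ passes through the interior of the bicorn $\mathcal U_{+}(A)\cup\mathcal U_{-}(S^{a}AS^{-a})$ between these crossings. The main ``obstacle'' is therefore just bookkeeping — matching each square root to the correct endpoint — together with that one-line inequality.
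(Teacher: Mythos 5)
Your argument is correct and is exactly the computation the paper waves off as ``trivial'' (the paper's proof is just a reference to Figure~\ref{aaPlus}). You compute the two slices of the bicorn on $Y=a/2$, verify via $2R_a(\gamma) > a$ that they overlap and via $r_a(\gamma)+R_a(\gamma)<a$ that the union is the single interval from $\alpha/\gamma + r_a(\gamma)$ to $\alpha/\gamma + a - r_a(\gamma)$, giving width $a - 2r_a(\gamma)$ as claimed; this matches the intended argument.
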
 
 
\begin{proof}   This is also a trivial computation.  See Figure ~\ref{aaPlus}. 
\end{proof}

\section{Fricke-Indexed Fundamental Domains}\label{frickeDoms}

\bigskip

\noindent
{\bf Convention} For the remainder of the paper, unless otherwise stated, we fix a  Fricke triple $(a,b,c)$.     Note that the fundamental translation length is thus $a$.
\bigskip

In \cite{scgArt} and \cite{ssEG}, we showed that \threesurf admits particularly nice fundamental domains indexed by solutions to the original Markoff equation.   Here we summarize this and its direct generalization to the general hyperbolic orbifold of signature $(0; 2,2,2, \infty)$.

\begin{figure}
\begin{center}
 \includegraphics{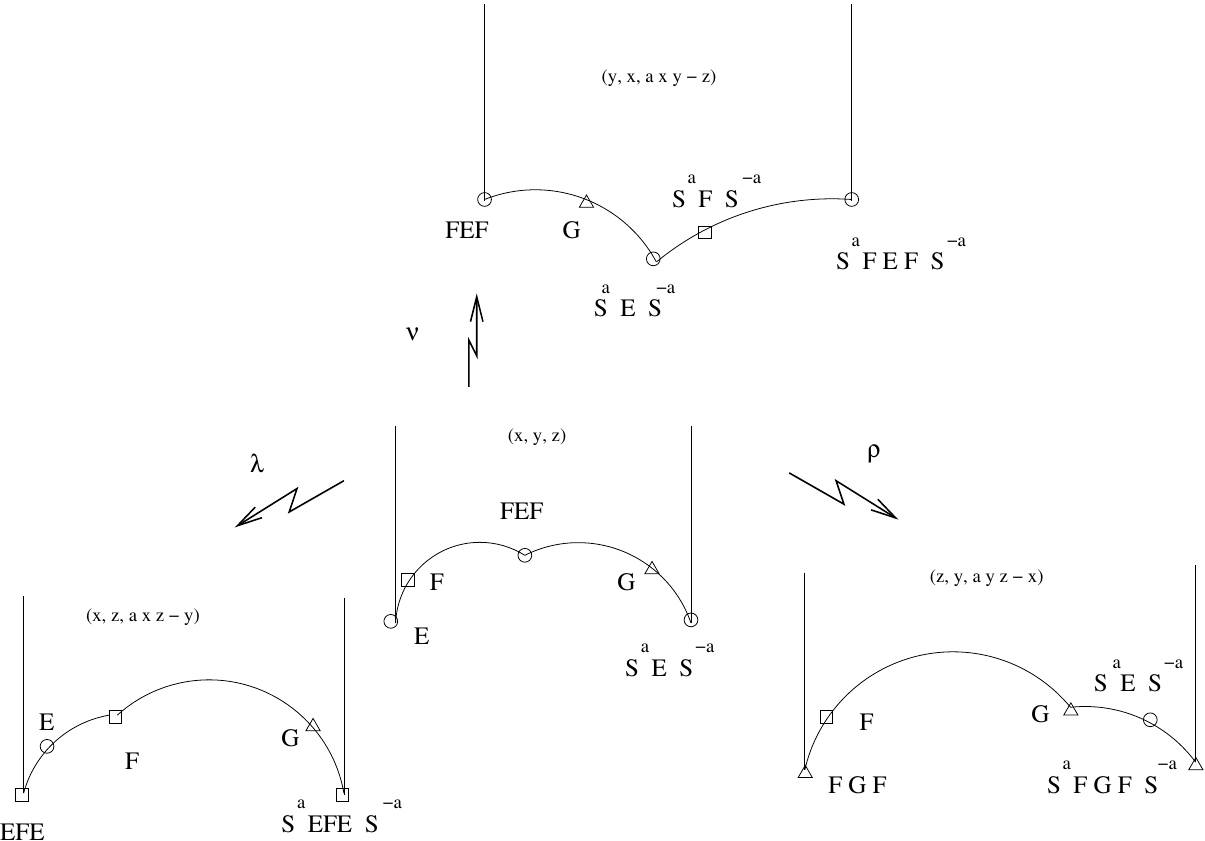}
\caption{Moving through the adjusted Fricke tree.}
\label{frickeTreeFig}
\end{center}
\end{figure}

\newpage 

\subsection{Fundamental Domains, Relating Uplift Regions}\label{relatingUplifts}

\begin{Deff} For any $(E, F, G)$ as in Corollary ~\ref{gotTriples}, we define the following maps to triples of elliptic elements of order two.
\[
\begin{aligned}
\nu: (E,F,G) &\mapsto (FEF, G, S^{a}FS^{-a})\\
\rho: (E,F,G) &\mapsto (FGF, F, S^{a}ES^{-a})\\
\lambda: (E,F,G)  &\mapsto (EFE, E, G)\,.
\end{aligned}
\] 
\end{Deff}

The following is a straightforward computation, compare with Figure \ref{genDualFig}.

\begin{Lem}   Fix some triple $E, F, G$ as above;  let $A = EF$ and $B = FEFG$.     For a homomorphism $\phi: \Gamma \to \Gamma$, let $\tilde \phi$ denote the induced homomorphism on the unique index two subgroup of $\Gamma$ that is of signature $(1;\infty)$ applied to ordered triplets of elements of this subgroup.    Then  
\[
\begin{aligned}
 \tilde \nu(A, B, AB) &= (B, B^{-1}A^{-1}B, A^{-1}B)\\
\tilde \rho(A, B, AB) &= (AB, B, AB^{2})\\
\tilde \lambda(A, B, AB) &= (A, AB, A^{2}B)\,.
\end{aligned}
\]
\end{Lem}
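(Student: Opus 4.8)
The plan is to verify the three stated identities by direct substitution, treating each of $\nu$, $\rho$, $\lambda$ in turn. The key observation that makes this manageable is the relationship $A = EF$ and $B = FEFG$ between the generators of the index-two subgroup and the order-two elliptics. Since each of $E$, $F$, $G$ has order two, we have $E^{-1} = E$, $F^{-1} = F$, $G^{-1} = G$, and also $S^a = GF E$ (or its inverse, depending on orientation conventions, $S^a = T_2 T_1 T_0$ from the background section, so up to relabeling the fundamental translation is the triple product). I would first record these facts, together with the consequence $S^a E S^{-a} = (GFE)E(GFE)^{-1} = GFE E E F G = GFEFG$ (using $E^2 = 1$, so $EEE = E$), $S^a F S^{-a} = GFEFEFG$, etc., so that every element appearing on the right-hand sides can be rewritten purely as a word in $E$, $F$, $G$.

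For $\nu$: the image triple is $(FEF, G, S^aFS^{-a})$. Applying the correspondence, the first entry contributes $(FEF)(G) = FEFG = B$ — matching the claimed first coordinate $B$. The second entry pairs $G$ with $S^aFS^{-a}$; I expand $G \cdot S^a F S^{-a}$ into a word in $E,F,G$ and check it equals $B^{-1}A^{-1}B = (FEFG)^{-1}(EF)^{-1}(FEFG) = (GFEF)(FE)(FEFG) = GFEF FE FEFG$, simplifying via $FF = 1$ to $GFE E FEFG = GFEFG$ wait — one must be careful with which pairs collapse; the point is that both sides reduce to the same reduced word, which is a finite check. The third coordinate, the product of the last two entries of $\nu(E,F,G)$ against $A^{-1}B$, is handled the same way. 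I would do $\rho$ and $\lambda$ identically: $\lambda$ is the easiest since $\lambda(E,F,G) = (EFE, E, G)$ involves no translations, giving $(EFE)(E) = EF F$ — no wait, $(EFE)\cdot E = EFEE = EF = A$, then $E \cdot G$ pairs to... and one checks against $(A, AB, A^2B)$.

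The main obstacle — really the only one — is bookkeeping: correctly pinning down the orientation/ordering conventions so that the correspondence "triple of elliptics $(X,Y,Z) \mapsto$ triple $(XY, YZ, XZ)$ of generating hyperbolics" (or whatever exact form it takes; this is the content of Corollary~\ref{gotTriples} and the surrounding discussion, matching Figure~\ref{genDualFig}) is applied consistently, and so that $S^a$ is expressed as the correct product $GFE$ versus $EFG$ and with the correct sign. Once the dictionary is fixed, each of the nine word-identities (three coordinates times three maps) is a mechanical free-group reduction using only $E^2 = F^2 = G^2 = 1$ and $S^a = GFE$, with no geometry entering. I would present the computation for $\nu$ in full and remark that $\rho$ and $\lambda$ are entirely analogous, which is consistent with the paper's stated stance that "the following is a straightforward computation."
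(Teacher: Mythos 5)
Your strategy — rewrite everything as words in $E,F,G$ using $E^2=F^2=G^2=1$ and $S^a=GFE$ (which follows from $S^aE=GF$ in Proposition~\ref{pssiFunDomThm}), then reduce — is exactly what the paper intends by ``straightforward computation,'' and your computations of the conjugates $S^aES^{-a}=GFEFG$, $S^aFS^{-a}=GFEFEFG$ are right. But there is a concrete error in your dictionary for $\tilde\phi$. You float the guess that an elliptic triple $(X,Y,Z)$ maps to $(XY,YZ,XZ)$; this is not what the Lemma uses. The rule is already pinned down in the statement: $A=EF$ and $B=FEFG$, and since $AB=EF\cdot FEFG=FG$ (using $F^2=E^2=1$), the correct map is
\[
(E',F',G')\ \longmapsto\ (E'F',\ F'E'F'G',\ F'G')\,,
\]
applied uniformly to the image triple $(E',F',G')=\phi(E,F,G)$. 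Under your guessed rule, for $\nu$ you would compute the second coordinate as $F'G'=G\cdot(GFEFEFG)=FEFEFG=A^{-1}B$, which is in fact the \emph{third} coordinate, not $B^{-1}A^{-1}B=GEFG$; the middle entry must be $F'E'F'G'=G\cdot FEF\cdot G\cdot GFEFEFG=GEFG$, which does reduce to $B^{-1}A^{-1}B$. With the dictionary corrected, all nine coordinates reduce as claimed (I checked them), so the rest of your plan — do $\nu$ in full, then note $\rho$ and $\lambda$ are identical in spirit — is sound.
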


With the above identifications,   the triple of  simple cusped geodesics paired to the triple $(A, B, AB)$  (mentioned in the final sentence of subsection \ref{BackSCG}) 
as seen on $\Gamma \backslash \mathcal H$ is nothing other than the projection of the rays emanating vertically up from the fixed points of $E$, $F$ and $G$.

\begin{Prop}\label{theMoves}     Let  $(E, F, G)$ be as above.  Then each of $\nu(E,F,G), \lambda(E,F,G)$, and $\rho(E,F,G)$ is a generating triple of $\Gamma$.  For each of these triples, the corresponding triple of fixed points gives rise to a solution of the  adjusted Fricke equation, by taking  inverses of  heights, as 
 indicated in Figure ~\ref{frickeTreeFig}.   Furthermore, if $z \ge \max\{x,y\}$ then the analogous inequality holds upon applying either of $\lambda$ or $\rho$.
\end{Prop}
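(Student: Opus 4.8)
The plan is to dispose of the three assertions in turn. For the first, recall from Proposition~\ref{pssiFunDomThm} that $S^{a}E=GF$, so $S^{a}=GFE$ and $S^{-a}=EFG$. Then $\lambda(E,F,G)=(EFE,E,G)$ visibly generates $\Gamma=\langle E,F,G\rangle$, since $E$ and $G$ occur and $F=E\,(EFE)\,E$. For $\nu$ and $\rho$ I would first check, by a one-line cancellation using $S^{\pm a}=GFE,\ EFG$, that the ordered product of the new triple in the convention of Corollary~\ref{gotTriples} is again $S^{a}$; e.g.\ $(S^{a}FS^{-a})\,G\,(FEF)=S^{a}$ and $(S^{a}ES^{-a})\,F\,(FGF)=S^{a}$. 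Hence $S^{\pm a}$ lies in the group generated by the image triple, so one may conjugate the translations away ($S^{-a}(S^{a}FS^{-a})S^{a}=F$, and similarly), and then recover the remaining two generators exactly as for $\lambda$. Since any triple of order-two elliptics that generates a signature $(0;2,2,2,\infty)$ group with product a generator of its cusp subgroup is a generating triple in the sense of Corollary~\ref{gotTriples}, this settles the first assertion.

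\textbf{The adjusted Fricke equation.} Here I would invoke Corollary~\ref{slideMats} to replace $(E,F,G)$ by the normalized triple $(E_{0},E_{1},E_{2})$; a real translation changes neither heights nor the validity of \eqref{AdjFrickeEq}. Among the new order-two elements, those that are not translates of some $E_{i}$ have fixed points $E_{1}(i/z)$ (for $FEF$), $E_{0}(\mathrm{fix}\,E_{1})$ (for $EFE$), and $E_{1}(\mathrm{fix}\,E_{2})$ (for $FGF$); each is a short M\"obius evaluation, and, after simplifying with $x^{2}+y^{2}+z^{2}=a\,xyz$, these fixed points turn out to have heights $z/(x^{2}+y^{2})$, $y/(x^{2}+z^{2})$, $x/(y^{2}+z^{2})$, that is, inverse heights $a\,xy-z$, $a\,xz-y$, $a\,yz-x$. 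Reading off the inverse heights of each whole image triple then exhibits it as the Fricke (Vieta) neighbour of $(x,y,z)$ obtained by replacing exactly one coordinate by the conjugate root of the relevant quadratic --- precisely the transition recorded in Figure~\ref{frickeTreeFig} --- and such a neighbour automatically solves \eqref{AdjFrickeEq}. A more conceptual alternative: the pairwise products of the image triple are hyperbolic with axes projecting to simple closed geodesics, their traces equal $a$ times the corresponding inverse heights, and these traces again satisfy Fricke's original equation \eqref{FrickeEq} because $\tilde\nu,\tilde\rho,\tilde\lambda$ preserve parabolicity of the commutator on the index-two subgroup.

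\textbf{The inequality.} Assume $z\ge\max\{x,y\}$. Under $\lambda$ the image triple is $(x,z,a\,xz-y)$ and under $\rho$ it is $(z,y,a\,yz-x)$, so in either case the old maximum $z$ already occurs among the first two coordinates and it suffices to show the new third coordinate is at least $z$. Since $r_{a}$ is real on the entries one has $x,y>2/a$, hence $ax,ay>2$; then $a\,xz-y-z=z(ax-1)-y>z-y\ge0$ for $\lambda$, and $a\,yz-x-z=z(ay-1)-x>z-x\ge0$ for $\rho$. (For $\nu$ the image triple is $(y,x,a\,xy-z)$, whose third coordinate need not dominate when $z$ is the larger root of its quadratic; this is why the assertion is made only for $\lambda$ and $\rho$.)

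\textbf{Main obstacle.} I expect the only real difficulty to be bookkeeping: keeping straight, for each of $\nu,\rho,\lambda$, which image element plays the role of $E'$, $F'$, $G'$ in the convention of Corollary~\ref{gotTriples} --- hence which inverse height becomes the new ``$z$'' --- so that the computed triples line up with the transitions drawn in Figure~\ref{frickeTreeFig}. The individual computations (the word cancellations, the three M\"obius evaluations, the two inequalities) are each only a few lines.
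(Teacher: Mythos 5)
Your proposal is correct and follows essentially the same approach as the paper's (terse) proof: the paper also checks generation directly, uses the translated matrices of Corollary~\ref{slideMats} to compute the inverse heights of the new fixed points and match them to the Vieta moves in Figure~\ref{frickeTreeFig}, and deduces the inequality from the hypotheses on $a$ (your $x>2/a$ giving $ax-1>1$ is exactly what is meant). You have simply filled in the explicit word cancellations and M\"obius evaluations that the paper leaves to the reader, and your observation about why $\nu$ fails the inequality is a helpful aside.
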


\begin{proof}  That each of these triples also generate $\Gamma$ is easily checked. 

By use of the translated version of the matrices given in Lemma \ref{slideMats}, 
 one  verifies that the triples of multiplicative inverses of the heights of the fixed points of each of the elements involved in $\nu(E,F,G), \lambda(E,F,G)$, and $\rho(E,F,G)$  are as indicated in Figure ~\ref{frickeTreeFig}.     Our hypotheses on $a$  imply  that $FGF$ has the lowest fixed point of the triple $\rho(E, F, G)$ and similarly for $EFE$ and $\lambda(E, F, G)$.
\end{proof}

\begin{Deff}       For $(E, F, G)$ as above, we call the union of  $\mathcal U_{-}(G)$ and  $\mathcal U_{+}(F)$ the associated uplift {\em bicorn region}.    See Figure \ref{fgAdjacentFig}.  
\end{Deff}

\begin{figure}
\begin{center}
\scalebox{.75}{\includegraphics{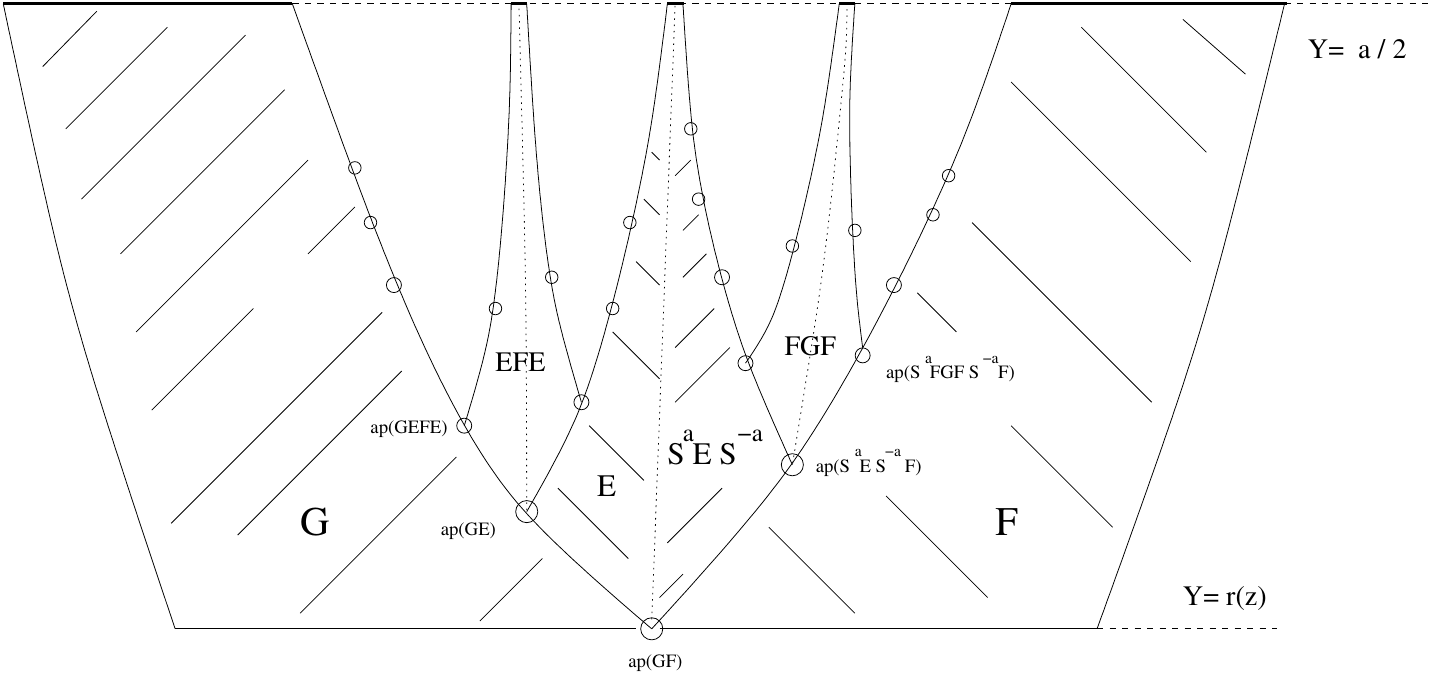}}
\caption{Uplift regions and apexes: Subtree generated by $\lambda$ and $\rho$;  (cross-hatched) trident of Lemma \ref{trident}; some excision intervals.}
\label{bifurcExciseFig}
\end{center}
\end{figure}

   Figure ~\ref{bifurcExciseFig} indicates regions discussed in the following two results. 
 In particular, the cross-hatched region of the figure shows the once-punctured trident formed by the union of the bicorn regions of   $(E, F, G)$, $\rho(E, F, G)$ and $\lambda(E, F, G)$ for one triple $(E, F, G)\,$.   Each non-horizontal dotted curve   indicates the splitting of a region into a  union of some $\mathcal U_{+}(A)$ and $\mathcal U_{-}(S^{a}AS^{-a})\,$. 
  
\begin{Lem}\label{trident}  Let $(E, F, G)$ be as above, with  $z \ge \max\{x,y\}$.      The intersection of $r_{a}(z)\le Y < a/2$ with the union of the bicorn regions of $(E, F, G)$, $\lambda(E, F, G)$ and $\rho(E, F, G)$   has the form of a  once-punctured trident.   The single puncture occurs at  $\text{ap}(GF)$;     the bifurcations of the trident are at $\text{ap}(GE)$ and $\text{ap}(FS^{a}ES^{-a})$.  
\end{Lem}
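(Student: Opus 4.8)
The strategy is to assemble the trident from three bicorn regions by gluing them along shared boundary points, using the structural lemmas already proved. First I would fix the triple $(E,F,G)$ with $z\ge\max\{x,y\}$ and record, via Corollary \ref{gotTriples}, that the fixed points of $E$, $F$, $G$ have heights $1/z$, $1/y$, $1/x$ respectively, so that $E$ (with the normalization of Corollary \ref{slideMats}) has lower-left entry $z = \gamma_E$, and that the axis of $S^aE$ has apex height $r_a(z)$. Then, by Proposition \ref{theMoves}, the two offspring triples $\lambda(E,F,G) = (EFE,E,G)$ and $\rho(E,F,G) = (FGF,F,S^aES^{-a})$ are again generating triples satisfying the adjusted Fricke equation, with $EFE$ (resp. $FGF$) the lowest fixed point of its triple; in particular the three bicorn regions are $\mathcal U_-(G)\cup\mathcal U_+(F)$ for $(E,F,G)$, $\mathcal U_-(G)\cup\mathcal U_+(E)$ for $\lambda$ (since the last two entries of $\lambda(E,F,G)$ are $E,G$), and $\mathcal U_-(S^aES^{-a})\cup\mathcal U_+(F)$ for $\rho$.

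\textbf{The gluing.} The three bicorns share boundary pieces in a predictable way. The $(E,F,G)$-bicorn and the $\lambda$-bicorn both contain $\mathcal U_-(G)$, and they are joined along $\mathcal U_+(E)$ versus $\mathcal U_+(F)$ — more precisely the two bicorns bifurcate where the upper boundaries of $\mathcal U_+(F)$ (from the parent) and $\mathcal U_+(E)$ (from $\lambda$) meet; by Proposition \ref{meetIsoCirc} and Corollary \ref{lowBdry} applied to the triple $(E,F,G)$, the relevant meeting point of $\mathcal U_+(E)$ and $\mathcal U_+(F)$ is $\text{ap}(EF)=\text{ap}(FE)$, which lies on the axis of $EF$; since $EF$ conjugated appropriately relates to $GE$ via the relation $EFG = S^a$ (whence $EF = S^aG^{-1}=S^aG$), I expect this bifurcation to sit at $\text{ap}(GE)$. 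Symmetrically, the $(E,F,G)$-bicorn and the $\rho$-bicorn share $\mathcal U_+(F)$ and are joined along $\mathcal U_-(G)$ versus $\mathcal U_-(S^aES^{-a})$; using $EFG=S^a$ once more, $FG = E^{-1}S^a = ES^a$ so $GF$ is conjugate to $S^aE$, and the meeting point of $\mathcal U_-(G)$ with $\mathcal U_-(S^aES^{-a})$ should be exactly $\text{ap}(FS^aES^{-a})$. Finally the common puncture of all three arms: the parent bicorn $\mathcal U_-(G)\cup\mathcal U_+(F)$ has its own bifurcation at $\text{ap}(GF)$ (the $h$-line through the fixed points of $F$ and $G$), and because $S^aE = GF$ by Proposition \ref{pssiFunDomThm}, its apex has height $r_a(z)$ by Corollary \ref{gotTriples} — this is precisely the level where the strip $r_a(z)\le Y < a/2$ begins, so the three bicorns close up at that single point, producing the puncture.

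\textbf{Assembling the statement.} Concretely I would: (i) intersect each of the three bicorn regions with the horizontal strip $r_a(z)\le Y < a/2$ and observe each yields a "single-pronged" region (two arms of a bicorn meeting at one bifurcation, truncated below at $Y=r_a(z)$ and above at $Y=a/2$); (ii) check that the parent bicorn's bifurcation point $\text{ap}(GF)$ is at height exactly $r_a(z)$, so within the strip the parent contributes a neighborhood of that boundary point — the common stem — while $\lambda$ and $\rho$ attach along the other two boundary arcs; (iii) verify the pairwise intersections are exactly the boundary arcs claimed (shared $\mathcal U_-(G)$ for parent/$\lambda$, shared $\mathcal U_+(F)$ for parent/$\rho$, and for $\lambda/\rho$ only the common puncture), which is where Lemma \ref{plusAlphaQoverlap} pins down the coordinates — e.g. the bifurcation of $\mathcal U_-(G)$ with a translate governs the $\text{ap}(GE)$ location, and the bifurcation involving $S^aES^{-a}$ governs $\text{ap}(FS^aES^{-a})$. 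The union is then three arms emanating from the puncture at $\text{ap}(GF)$, i.e. a once-punctured trident, with bifurcations at $\text{ap}(GE)$ and $\text{ap}(FS^aES^{-a})$.

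\textbf{Main obstacle.} The delicate point is bookkeeping: correctly tracking which $\mathcal U_\pm$ of which element appears in each offspring bicorn (the $\pm$ decoration depends on the position of the fixed point relative to $\alpha/\gamma$, and this must be read off consistently for $E$, $F$, $G$, $EFE$, $FGF$, $S^aES^{-a}$ under the normalization of Corollary \ref{slideMats}), and then matching each shared boundary arc to the right $\text{ap}(\cdot)$ via the group relation $EFG=S^a$. Once the correspondences $EF = S^aG$ and $FG = ES^a$ are used to rewrite $\text{ap}(EF)$ and $\text{ap}(FG)$ in the stated form, the geometry follows from Proposition \ref{meetIsoCirc}, Corollary \ref{lowBdry}, and Lemma \ref{plusAlphaQoverlap}; the only genuine verification is the elementary computation that $\text{ap}(GF)$ has height $r_a(z)$, which is essentially Corollary \ref{gotTriples}. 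I would therefore spend the bulk of the argument on a clean statement of the three bicorns' boundary arcs and their pairwise intersections, then invoke the earlier lemmas to read off the trident.
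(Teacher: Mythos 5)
Your overall scaffolding is the right one and matches the paper's: identify the three bicorn regions attached to $(E,F,G)$, $\lambda(E,F,G)=(EFE,E,G)$, $\rho(E,F,G)=(FGF,F,S^aES^{-a})$, find where each pair bifurcates, and glue. You also correctly read off the three bicorns from the Definition and correctly place $\text{ap}(GF)$ at height $r_a(z)$ via Corollary~\ref{gotTriples}. But two steps in the middle are not carried by the argument you give.

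First, the detour through group relations is both unnecessary and incorrect. From Proposition~\ref{pssiFunDomThm} the factorization is $S^aE=GF$, i.e. $GFE=S^a$, not $EFG=S^a$; so $EF=S^{-a}G$, not $S^aG$. But more to the point, you do not need any such relation to locate the bifurcations. The $\lambda$-bicorn is by definition $\mathcal U_-(G)\cup\mathcal U_+(E)$, and by the paper's convention (stated just before Corollary~\ref{lowBdry}, resting on Lemma~\ref{hitOrPerp}) the uplift regions of the two order-two elements forming a bicorn bifurcate exactly at $\text{ap}$ of their product — so $\text{ap}(GE)$ directly, and $\text{ap}(FS^aES^{-a})$ for $\rho$ likewise. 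Your route via $\text{ap}(EF)$ and conjugation does not actually establish this and introduces a false relation in passing.

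Second, and this is the real gap, you assert that because $\text{ap}(GF)$ has height $r_a(z)$, ``the three bicorns close up at that single point, producing the puncture,'' but you never show \emph{why} the middle of the trident — the region contributed by $\mathcal U_+(E)$ (from $\lambda$) and $\mathcal U_-(S^aES^{-a})$ (from $\rho$) — reaches down to and pinches off at $\text{ap}(GF)$. This is precisely where Lemma~\ref{plusAlphaQoverlap} does its real work: since $S^aE=GF$ is hyperbolic, $\text{ap}(GF)$ is the intersection point of the \emph{lower} boundaries of $\mathcal U_+(E)$ and $\mathcal U_-(S^aES^{-a})$, and those two regions continue to overlap up to height above $a/2$. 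That fact is what fills in the central prong and leaves $\text{ap}(GF)$ as an isolated missing point; without it, the union might fail to be simply a trident (e.g.\ the middle piece could detach from the bottom). Your only mention of Lemma~\ref{plusAlphaQoverlap} is to "pin down coordinates" of the other two apexes, which is not where it is needed — those two are located by the bicorn-bifurcation principle. Relatedly, your parenthetical claim that the $\lambda$- and $\rho$-bicorns meet only at the puncture is false: $\mathcal U_+(E)$ and $\mathcal U_-(S^aES^{-a})$ overlap in a full two-dimensional bicorn region (this is what Lemma~\ref{plusAlphaQoverlap} and Figure~\ref{aaPlus} describe).
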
  

\begin{proof}  From the original triple we have that $G F$ has axis projecting to a simple closed geodesic; applying $\rho$, the same is true for  $FS^{a}ES^{-a}$; applying $\lambda$, also for $GE$.   Lemma ~\ref{hitOrPerp} shows that $\text{ap}(GF)$ lies on 
the upper boundary of the uplift regions of $G$ and $F$.    Furthermore, since by construction, the h-line segment joining the fixed points of $F$ and $G$   is a highest lifting segment of the simple geodesic, and these respective fixed points satisfy $\Re(f) < \Re(g)$, we conclude that the union of $\mathcal U_{-}(G)$ and $\mathcal U_{+}(F)$ bifurcates at $\text{ap}(GF)$.    Similar roles are played by $\text{ap}(GE)$ and $\text{ap}(FS^{a}ES^{-a})$.

 By Lemma ~\ref{plusAlphaQoverlap} we have  that  $\text{ap}(GF)$ lies on the lower boundary of both
$\mathcal U_{+}(E)$ and $\mathcal U_{-}(S^{a}ES^{-a})$; furthermore,  
  these regions meet  for $Y$ between the height of $\text{ap}(GF)$ and a value greater than $a/2$.   
 
The result follows by now considering the union. 
 \end{proof} 
 
\begin{Deff}       For $(E, F, G)$ as above,   let $\mathcal T_{\lambda, \rho}(E, F, G)$ denote the tree formed by applying to the triple all finite compositions (including the identity) of $\lambda$  and $\rho$ to $(E, F, G)$, and let $\mathcal U_{\lambda, \rho}(E, F, G)$ denote the  union  of all of the corresponding bicorn regions.
\end{Deff}

\begin{Prop}\label{lamRhoFilling}   Let $(E, F, G)$ be a triple as above, with  $z \ge \max\{x,y\}$.      Then $\mathcal U_{\lambda, \rho}(E, F, G)$
  meets the strip $r_{a}(z) < Y <a/2$ in an infinitely punctured domain bounded by the lower boundary of $\mathcal U_{-}(G)$ and the lower boundary of $\mathcal U_{+}(F)$.   
  \end{Prop}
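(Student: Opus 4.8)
The plan is to prove Proposition~\ref{lamRhoFilling} by induction on the tree $\mathcal T_{\lambda,\rho}(E,F,G)$, using Lemma~\ref{trident} as the single-step building block and showing that successive tridents glue together along shared boundary arcs without creating new ``escape routes'' out of the region bounded below by the lower boundaries of $\mathcal U_-(G)$ and $\mathcal U_+(F)$. First I would set up the right frame: the bicorn region of $(E,F,G)$ is $\mathcal U_-(G)\cup\mathcal U_+(F)$, which by Lemma~\ref{hitOrPerp} bifurcates at $\text{ap}(GF)$ (the apex of the highest lift of the simple closed geodesic $GF$), and whose two lower boundary arcs are precisely the curves named in the statement. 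Applying $\lambda$ and $\rho$ once produces, by Lemma~\ref{trident}, a once-punctured trident: the original bicorn together with the bicorns of $\lambda(E,F,G)$ and $\rho(E,F,G)$, whose new ``outer'' lower boundaries are the lower boundaries of $\mathcal U_-(EFE)$ and $\mathcal U_+(E)$ on the $\lambda$-side, and of $\mathcal U_+(F)$ and $\mathcal U_-(FGF)$ on the $\rho$-side (one should track these carefully against the images of the maps $\nu,\lambda,\rho$ given in the Definition preceding Proposition~\ref{theMoves}). The key observation is that the puncture of the one-step trident sits at $\text{ap}(GF)$, which lies on the \emph{upper} boundaries of $\mathcal U_-(G)$ and $\mathcal U_+(F)$, hence strictly above the lower-boundary arcs that we claim bound the final region; so passing from the bicorn to the trident has not moved the two designated outer boundary arcs, it has only filled in area below $Y=a/2$ between them and opened one new puncture.

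Next I would make the induction precise. Let $R_n$ be the union of all bicorn regions coming from compositions of $\lambda,\rho$ of length $\le n$, intersected with the strip $r_a(z)<Y<a/2$; one shows $R_n$ is a domain (open, connected) bounded on the outside by the lower boundary of $\mathcal U_-(G)$ on the left and the lower boundary of $\mathcal U_+(F)$ on the right, with $2^{n}-1$ punctures, the punctures being the apexes $\text{ap}(\cdot)$ of the simple closed geodesics indexed by the internal nodes of the depth-$n$ subtree. The inductive step attaches, at each depth-$n$ node, the trident of Lemma~\ref{trident} for that node's triple; Proposition~\ref{theMoves} guarantees the hypothesis $z\ge\max\{x,y\}$ is preserved by $\lambda$ and $\rho$, so Lemma~\ref{trident} applies at every node. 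The content is that the new trident is glued to $R_n$ exactly along the pair of boundary arcs that $R_n$ currently presents at that node's bicorn tip, i.e.\ along the upper boundary arcs of $\mathcal U_-(G')$ and $\mathcal U_+(F')$ for the node's triple $(E',F',G')$ — these arcs are interior to $R_{n+1}$, each replaced by two new lower-boundary arcs that become part of the outer boundary of the next stage, plus one new puncture at $\text{ap}(G'F')$. One needs Lemma~\ref{plusAlphaQoverlap} here too: it says the bicorns of a node and of its parent overlap for $Y$ ranging from the height of the shared apex up past $a/2$, which is what makes the gluing genuinely fill the strip rather than leaving a gap near $Y=a/2$.

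Then, taking the nested union $R_\infty=\bigcup_n R_n$, the outer boundary is still exactly the two arcs named (they were never altered), the punctures accumulate to a countable dense set indexed by all internal nodes of the full tree, and $R_\infty$ meets $r_a(z)<Y<a/2$ in the claimed infinitely-punctured domain. The main obstacle I expect is the combinatorial bookkeeping of \emph{which} order-two elements appear as the outer boundary pieces after each application of $\lambda$ or $\rho$, and verifying that the left subtree (generated by $\lambda$) only ever modifies boundary arcs on the $G$-side and the right subtree (generated by $\rho$) only on the $F$-side, so that the two distinguished arcs — lower boundary of $\mathcal U_-(G)$ and lower boundary of $\mathcal U_+(F)$ — are literally fixed throughout; this is where Figure~\ref{bifurcExciseFig} and the $\tilde\lambda,\tilde\rho$ formulas of the Lemma after the Definition of $\nu,\rho,\lambda$ do the real work, and where one must be careful that the branch $\nu$ (which we are \emph{not} iterating) is precisely the branch that would have touched those two arcs. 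The remaining verifications — that each glued trident's puncture lies strictly above the relevant outer arcs, and that no two tridents at incomparable nodes overlap except through the shared ancestor structure — are routine given Lemmas~\ref{hitOrPerp}, \ref{trident} and \ref{plusAlphaQoverlap}, and I would relegate them to ``a straightforward check against Figure~\ref{bifurcExciseFig}.''
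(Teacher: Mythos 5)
Your plan follows the paper's proof in its essentials: iterate Lemma~\ref{trident} down the $\lambda,\rho$ subtree, preserve the hypothesis $z\ge\max\{x,y\}$ via Proposition~\ref{theMoves}, and observe that each of $\lambda$ and $\rho$ retains one of $F$ or $G$ in its slot, so that the two outer boundary arcs named in the statement are never disturbed as one passes from a node to its children. However, the bookkeeping in your second and third paragraphs has the roles of $\lambda$ and $\rho$ swapped, and this is precisely the ``combinatorial bookkeeping'' you flagged as the main obstacle: since $\lambda(E,F,G)=(EFE,E,G)$ and $\rho(E,F,G)=(FGF,F,S^aES^{-a})$, the $\lambda$-child's bicorn is $\mathcal U_-(G)\cup\mathcal U_+(E)$ (so $\lambda$ keeps the $G$-side arc and replaces the $F$-side arc), while the $\rho$-child's bicorn is $\mathcal U_-(S^aES^{-a})\cup\mathcal U_+(F)$ (so $\rho$ keeps the $F$-side arc and replaces the $G$-side); in particular the elements $\mathcal U_-(EFE)$ and $\mathcal U_-(FGF)$ that you list as new outer lower boundaries never appear in any child's bicorn, since $EFE$ and $FGF$ occupy the $E$-slot, not the $F$- or $G$-slot, of the new triples. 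With those labels corrected the argument goes through and matches the paper's (considerably terser) proof.
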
 
 
\begin{proof}    Since $\lambda$ and $\rho$ preserve the property $z \ge \max\{x,y\}$, we can repeatedly invoke the previous lemma.     We thus need only show that successive ``generations'' of uplift triples overlap appropriately.     But, as each of $\lambda$ and $\rho$ retains one of $F$ or $G$ in its original position and promotes either $E$ or $S^{a}ES^{-a}$ to the other, this also easily follows.   
\end{proof}

\subsection{Tree of Triples and Simple Closed Geodesics}

For each Fricke equation, a unique minimum (with respect to the sum of the $x$, $y$ and $z$) solution  exists as \cite{Sch} p. 352 deduces from his Theorem 3.1 (see also  \cite{B});  this thus also holds true for the adjusted Fricke equations  and  \cite{Sch} implies that this minimum solution is given by the multiplicative inverses of the heights of the $T_{i}$.  All solutions (up to cyclic ordering of $x, y, z$) are derived from this minimal solution by sequences of $\nu$, $\lambda$ and $\rho$.   We form a tree completely analogous to that of \cite{Sch}, see his p. 351.

\begin{Deff}  Let $\mathcal T^{\nu}_{\lambda, \rho}$ denote the tree formed by joining $\mathcal T_{ \lambda, \rho}(T_{0}, T_{1}, T_{2})$  to \newline 
$\mathcal T_{\lambda, \rho}(\, \nu(T_{0}, T_{1}, T_{2})\, )$  with an edge (labeled by $\nu$).    The {\em uplift configuration} is the union of the bicorn regions of the nodes of this tree:
\[
\mathcal U^{\nu}_{\lambda, \rho} :=  \mathcal U_{\lambda, \rho}(T_{0}, T_{1}, T_{2}) \cup \mathcal U_{\lambda, \rho}(\, \nu(T_{0}, T_{1}, T_{2})\, )\, .
\]
The {\em normalized uplift configuration} is given by   replacing $\mathcal U_{+}(T_{2})$ in $\mathcal U^{\nu}_{\lambda, \rho}$ 
with its horizontal translation  by $-a$.
\end{Deff}   

\begin{Deff}\label{simplePrimeSet}   Let $\mathcal C$ denote the set of all simple closed geodesics on $\Gamma\backslash \mathcal H$.
\end{Deff} 


 Figure ~\ref{bifurcExciseFig}  indicates some of the geometry of the following result.  
 
\begin{Thm}\label{allScg}    Fix an adjusted Fricke equation.  The normalized uplift configuration  meets the strip $r_{a}(1) \le Y <a/2$ in an infinitely punctured (half-open) domain.   Let $\mathcal P$ be the set of these punctures.    Then $\mathcal P$ is in one-to-one correspondence with $\mathcal C$: each $p \in \mathcal P$  is the apex of a highest lift of some element of $\mathcal C$ and each element of $\mathcal C$ has a highest lift with apex in $\mathcal P$.
\end{Thm}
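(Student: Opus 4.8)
The plan is to establish the two halves of the bijection separately, using the tree structure developed in Section \ref{frickeDoms} together with the geometric picture from \S\ref{uplift}.

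\medskip

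\noindent\textbf{Step 1: the normalized uplift configuration fills the strip, modulo the punctures.} First I would combine Proposition \ref{lamRhoFilling} for the two subtrees $\mathcal T_{\lambda,\rho}(T_0,T_1,T_2)$ and $\mathcal T_{\lambda,\rho}(\nu(T_0,T_1,T_2))$ with the edge labeled $\nu$. Proposition \ref{lamRhoFilling} tells us that each subtree's union of bicorn regions meets the strip $r_a(z)<Y<a/2$ (with $z$ the maximal height-inverse of the relevant root triple) in an infinitely punctured domain bounded by the two lower boundaries $\mathcal U_-(G)$ and $\mathcal U_+(F)$ of the root triple. The $\nu$-move of Lemma in \S\ref{relatingUplifts} relates the two roots $(T_0,T_1,T_2)$ and $\nu(T_0,T_1,T_2)=(T_1T_0T_1,T_2,S^aT_1S^{-a})$, and the normalization (translating $\mathcal U_+(T_2)$ by $-a$) is exactly what is needed so that the bicorn regions of the two roots glue along their shared boundary pieces to cover the full strip $r_a(1)\le Y<a/2$; here I use that the minimal solution of the adjusted Fricke equation is $(1/z,1/y,1/x)$ with $z=\max$ equal to the height-inverse of the $T_i$ normalized so $r_a(z)=r_a(1)$, as recorded via \cite{Sch}. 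The half-openness at $Y=r_a(1)$ comes from the closed lower boundaries in Lemma \ref{plusAlphaQoverlap}.

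\medskip

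\noindent\textbf{Step 2: punctures come from simple closed geodesics.} Every node of $\mathcal T^\nu_{\lambda,\rho}$ is a generating triple $(E,F,G)$ of $\Gamma$ (Proposition \ref{theMoves}), and by Lemma \ref{trident}, at each such node a puncture of the local trident occurs at $\text{ap}(GF)$, which by Lemma \ref{hitOrPerp} together with the highest-lifting-segment property from Proposition \ref{pssiFunDomThm} and Corollary \ref{gotTriples} is precisely the apex of the highest lift of the simple closed geodesic that lifts to the axis of $S^aE$ (equivalently of $GF$). So each $p\in\mathcal P$ is such an apex; this gives a well-defined map $\mathcal P\to\mathcal C$.

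\medskip

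\noindent\textbf{Step 3: surjectivity and injectivity.} Surjectivity: every simple closed geodesic $\gamma\in\mathcal C$ has, by Proposition \ref{pssiFunDomThm}, a highest lift which is the axis of some $S^aE$ with a factorization $S^aE=GF$ into order-two elliptics, and $\{E,F,G\}$ generates $\Gamma$; the tree of all solutions of the adjusted Fricke equation (all obtained from the minimal one by words in $\nu,\lambda,\rho$, per \cite{Sch}) then places this triple — after the normalization and possibly after reordering — at a node of $\mathcal T^\nu_{\lambda,\rho}$, so $\text{ap}(GF)\in\mathcal P$. Injectivity: distinct nodes of the tree yield distinct triples up to the relevant equivalence, hence distinct axes $GF$ projecting to distinct simple closed geodesics; conversely the correspondence between vertices of the generator graph and simple closed geodesics (\S\ref{BackSCG}, \cite{JM}) shows a given $\gamma$ determines its generating triple up to inversion and conjugacy, which in the normalized picture pins down a unique node and hence a unique apex in $\mathcal P$.

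\medskip

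\noindent I expect the main obstacle to be Step 1: verifying that the two subtrees, glued by the $\nu$-edge after the $-a$ normalization of $\mathcal U_+(T_2)$, actually cover the strip $r_a(1)\le Y<a/2$ with no gaps other than the countable puncture set — i.e.\ that successive generations of bicorn regions overlap exactly as in Lemma \ref{trident} and Proposition \ref{lamRhoFilling} but now across the root edge rather than within a single subtree. This requires checking that the boundary curves produced by $\nu$ match those produced by $\lambda,\rho$ on the other side, which is where the specific choice of normalization and the inequalities on the Fricke triple $(a,b,c)$ (namely $2<a\le b\le c<ab/2$) are genuinely used.
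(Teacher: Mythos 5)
Your plan follows the same broad outline as the paper's proof --- invoke Proposition \ref{lamRhoFilling} on the two subtrees joined by the $\nu$-edge, show each trident puncture is an apex of a highest lift via Lemma \ref{trident}, and then argue the correspondence with $\mathcal C$ using the dual-graph picture --- but there is a concrete gap in Steps 2--3 that the paper is careful to handle and your proposal does not.

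The set $\mathcal P$ is not exhausted by the points $\text{ap}(GF)$ produced by Lemma \ref{trident} at the nodes of $\mathcal T^{\nu}_{\lambda,\rho}$. Two additional punctures appear that your argument never accounts for: one is created precisely where the two subtree configurations $\mathcal U_{\lambda,\rho}(T_0,T_1,T_2)$ and $\mathcal U_{\lambda,\rho}(\nu(T_0,T_1,T_2))$ are glued across the $\nu$-edge (namely the intersection of the lower boundaries of $\mathcal U_{+}(T_1)$ and $\mathcal U_{-}(S^a T_1 S^{-a})$, which Lemma \ref{plusAlphaQoverlap} identifies as $\text{ap}(S^a T_1)$), and the other is created by the normalization itself (the apex of the axis of $T_2 S^{-a}$). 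These two points are the apexes of highest lifts of the simple closed geodesics that are the projections of the axes of $B = T_1T_0T_1T_2$ and $A = T_0T_1$, respectively; in the dual-graph translation $(E,F,G)\mapsto(EF,FEFG,FG)$ these two geodesics are exactly the ones not captured as $\text{ap}(FG)$ at any node of the tree. Without this bookkeeping your map $\mathcal P\to\mathcal C$ is only defined on a proper subset of $\mathcal P$, and your surjectivity argument is silent on these two geodesics, so the claimed bijection is not established. (This is also where the precise strip bound matters: those two extra punctures sit at heights $r_a(y)$ and $r_a(x)$ with $x=1$ the minimum of the root triple, which is how the lower bound $Y \ge r_a(1)$ in the statement arises. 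Your aside in Step 1 that $z=\max$ is normalized so that $r_a(z)=r_a(1)$ has the roles of $x$ and $z$ reversed; the proof propagates the strip down from $r_a(z)$ to $r_a(x)=r_a(1)$ exactly because of those two lower-lying punctures.)

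Your instinct that gluing across the $\nu$-edge is where the work lies is right, but the delicate point is not whether the boundary curves match (Lemma \ref{plusAlphaQoverlap} handles that) --- it is recognizing that the gluing and the normalization each manufacture one new puncture, and then matching those to the two "missing" simple closed geodesics.
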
 

\begin{proof}  For ease of notation, let $(x,y,z)$ denote the solution to the adjusted Fricke equation associated to $(T_{0}, T_{1}, T_{2})$.  Since this is a minimal solution,    $\nu$ sends $(T_{0}, T_{1}, T_{2})$ to a triple whose lowest fixed point is given by its  $E$-entry, $T_{1}T_{0}T_{1}$.   Let $w$ be the corresponding entry in the resulting solution to the adjusted Fricke equation.   Now,  Proposition \ref{lamRhoFilling} shows that  $\mathcal U_{\lambda, \rho}(\, \nu(T_{0}, T_{1}, T_{2})\, )$ meets  the horizontal open strip   $r_{a}(w) <Y < a/2$  in an infinitely punctured domain whose right hand boundary is the right hand boundary of $\mathcal U_{+}(T_{2})$ and whose left hand boundary is that of $\mathcal U_{-}( S^{a}T_{1}S^{-a})$.
  But,  by Lemma \ref{plusAlphaQoverlap},  this left hand boundary is contained in $\mathcal U_{+}( T_{1})$ for  $r_{a}(y)<Y<a/2$.     Thus,  $\mathcal U_{\lambda, \rho}(\, \nu(T_{0}, T_{1}, T_{2})\, )$ and $\mathcal U_{\lambda, \rho}(T_{0}, T_{1}, T_{2})$ have non-trivial intersection.   The union, $\mathcal U^{\nu}_{\lambda, \rho}$  thus meets the strip $r_{a}(z) < Y <a/2$ in an infinitely punctured domain.

    The normalization simply replaces $\mathcal U_{+}(T_{2})$ by $\mathcal U_{+}(S^{-a}T_{2}S^{a})$; due to  Lemma \ref{plusAlphaQoverlap}, the intersection with the strip remains an infinitely punctured domain. 

From Lemma \ref{trident},   each of $\mathcal U_{\lambda, \rho}(T_{0}, T_{1}, T_{2})$ and $\mathcal U_{\lambda, \rho}(\, \nu(T_{0}, T_{1}, T_{2})\, )$  contributes elements to $\mathcal P$ that are apexes.   That the lifts in question are highest lifts of simple closed geodesics follows by observing the geometry of the fundamental domains (each of which has a single ideal vertex).    

There are exactly two remaining elements of $\mathcal P$:  one introduced by taking the union of  $\mathcal U_{\lambda, \rho}(T_{0}, T_{1}, T_{2} )$ and $\mathcal U_{\lambda, \rho}(\, \nu(T_{0}, T_{1}, T_{2})\, )$; the second an artifact of our normalization.      The first is the puncture lying on the intersection of the lower boundaries of $\mathcal U_{+}(T_{1})$ and $\mathcal U_{-}(S^{a}T_{1}S^{-a})$.    By  
 Lemma \ref{plusAlphaQoverlap} this is the apex of the axis of $S^{a}T_{1}$.   Similarly, our normalization introduces the puncture given by apex of the axis of $T_{2}S^{-a}$.     Now,  $\text{ap}(S^{a}T_{1})$ lies on $Y = r_{a}(y)$ which is lies above or is the line $Y = r_{a}(x)$, the horizontal line upon which lies $\text{ap}(T_{2}S^{-a})$; this as $x=1$ is the minimum of the triple of inverse of heights of the fixed points of $T_{0}$, $T_{1}$ and $T_{2}$.   These two apexes lie on highest lifts of   the simple closed geodesics  (seen by using conjugation and taking inverses) that are the projections of the axes of $B = T_{1}T_{0}T_{1}T_{2}$ and 
$A = T_{0}T_{1}$, respectively.

 Finally, by the discussion  in the previous subsection, replacing each node $(E, F, G)$ of $\mathcal T^{\nu}_{\lambda, \rho}$ by   the triple  $(EF, FEFG, FG)$ gives the tree of all triples of associated simple closed geodesics on the canonical hyperbolic once punctured torus double (ramified) covering $\Gamma \backslash \mathcal H$.   But,  the bicorn region at each node of $\mathcal T^{\nu}_{\lambda, \rho}$ gives the element $\text{ap}(FG) \in \mathcal P$.   That is, associated to each node of this tree, is the apex of a highest lift of the simple closed geodesic whose face in the dual graph has (directed) edges labeled by $\tilde \lambda$ and $\tilde \rho$ emanating from the given node $(EF, FEFG, FG)$.    Since our initial node is $(T_{0}, T_{1}, T_{2})$,  we conclude that these elements of $\mathcal P$ are the apexes of highest lifts for all elements of $\mathcal C$ other than the projection of the axes of $A$ and $B$ (as defined above).    But, we have already seen that the remaining points of $\mathcal P$ account exactly for these two simple closed geodesics.     We thus conclude that $\mathcal P$ is exactly in one-to-one equivalence with $\mathcal C$, by associating apexes to projections of corresponding h-lines.   
\end{proof}

\subsection{The Line $Y=a/2$} 

\begin{Prop}\label{intervalsAtLine}   The normalized uplift configuration meets the line $Y = a/2$ in the union of disjoint intervals:
\[ \bigsqcup_{T}\;  \mathcal U_{+}(T ) \cup \mathcal U_{-}(S^{a}TS^{-a})\,,\]
where the union is over all order two elements $T$ appearing in the triple for any node of the  tree $\mathcal T^{\nu}_{\lambda, \rho}$.     
\end{Prop}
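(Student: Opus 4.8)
The plan is to compute the trace on $Y=a/2$ directly from the definition of the normalized uplift configuration as a union of bicorn regions, and then repackage the resulting pieces. Since the bicorn region of a node $(E,F,G)$ is by definition $\mathcal U_-(G)\cup\mathcal U_+(F)$, the trace of the whole configuration on the line is a union of ``half-intervals'' $\mathcal U_+(T)\cap\{Y=a/2\}$ and $\mathcal U_-(T)\cap\{Y=a/2\}$, one pair for each order-two $T$ occurring in a triple of the tree $\mathcal T^{\nu}_{\lambda,\rho}$, with the single modification that $\mathcal U_+(T_2)$ is replaced by $\mathcal U_+(S^{-a}T_2S^a)$. By Lemma \ref{plusAlphaQoverlap} and Lemma \ref{howWide}, for an order-two $A$ with lower-left entry $\gamma$ and fixed point of abscissa $\alpha/\gamma$ one has $\mathcal U_+(A)\cap\{Y=a/2\}=(\alpha/\gamma+r_a(\gamma),\,\alpha/\gamma+R_a(\gamma))$ and $\mathcal U_-(S^aAS^{-a})\cap\{Y=a/2\}=(\alpha/\gamma+a-R_a(\gamma),\,\alpha/\gamma+a-r_a(\gamma))$; since $R_a(\gamma)>a/2$, these two intervals overlap and their union is exactly the excision interval of $A$, of width $a-2r_a(\gamma)$. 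Thus the statement reduces to: (i) the half-intervals appearing in the trace pair up into full excision intervals, one for each order-two element of the tree; and (ii) distinct excision intervals are disjoint.

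For (i) I would use the explicit action of $\lambda,\rho,\nu$ on triples. The key point is that each $E$-slot element $E$ of a node $(E,F,G)$ reappears, via the $\lambda$-child $(EFE,E,G)$, as an $F$-slot element (contributing $\mathcal U_+(E)$) and, via the $\rho$-child $(FGF,F,S^aES^{-a})$, with $S^aES^{-a}$ as a $G$-slot element (contributing $\mathcal U_-(S^aES^{-a})$); since applying $\lambda$ and $\rho$ freely generates the nodes of each subtree, this produces the excision interval of every such $E$, and these $E$'s exhaust every triple element except the seeds $T_0,T_1,T_2$ and the conjugate $S^aT_1S^{-a}$. The remaining, finitely many, half-intervals are matched by hand: $\mathcal U_+(T_0)$ (from $\lambda(T_0,T_1,T_2)$) with $\mathcal U_-(S^aT_0S^{-a})$ (from $\rho(T_0,T_1,T_2)$); $\mathcal U_+(T_1)$ (from the root) with $\mathcal U_-(S^aT_1S^{-a})$ (from the $\nu$-child $\nu(T_0,T_1,T_2)=(T_1T_0T_1,T_2,S^aT_1S^{-a})$); and the normalized half-interval $\mathcal U_+(S^{-a}T_2S^a)$ with $\mathcal U_-(T_2)$ (from the root), which is the excision interval of $S^{-a}T_2S^a$. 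One checks that every half-interval of the trace is used exactly once in this way, so the trace is precisely $\bigsqcup_T \mathcal U_+(T)\cup\mathcal U_-(S^aTS^{-a})$ (with $T_2$ read as $S^{-a}T_2S^a$ after normalization).

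For (ii) I would invoke the filling results already established. Lemma \ref{trident}, Proposition \ref{lamRhoFilling} and Theorem \ref{allScg} show that inside every strip $r_a(z)\le Y<a/2$ the configuration is a punctured domain whose punctures are the apexes $\mathrm{ap}(GF)$, all of height strictly less than $a/2$ and lying on the \emph{upper} boundaries, while by Lemma \ref{plusAlphaQoverlap} the lower boundaries of the relevant pairs of regions cross only at apexes of height $r_a(\gamma)<a/2$; hence above all of these heights, on the line $Y=a/2$, the regions have separated into disjoint prongs, so the excision intervals do not overlap. I expect step (i) to be the main obstacle: the bookkeeping that shows each excision interval is produced once and only once, with the root, the lone $\nu$-edge, and the normalization correctly absorbed; the geometric ingredients for the interval computation in the first paragraph and for the disjointness in the last are already in hand.
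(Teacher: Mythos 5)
Your proposal is correct and takes essentially the same route as the paper: compute the trace of the bicorn-region union on $Y=a/2$, repackage the half-intervals as excision intervals by tracking how $\nu$, $\lambda$, $\rho$ move elements between the $E$-, $F$- and $G$-slots, and deduce disjointness from the fact that the puncture set $\mathcal P$ (which contains all bifurcation points) lies strictly below $Y=a/2$. What you spell out in step (i) is exactly what the paper compresses into ``Observing the action of $\nu$, $\lambda$ and $\rho$, one easily sees that the normalized uplift configuration meets the line $Y=a/2$ in the union of the intervals indexed by the various $E$'' --- the paper's ``$E$'' in that sentence is precisely the $E$-slot element of each node (together with the few boundary cases you handle by hand), and this reading resolves the apparent imprecision in the Proposition's phrase ``all order two elements $T$ appearing,'' since a $G$-slot conjugate such as $S^aT_0S^{-a}$ contributes only the $\mathcal U_-$ half of $T_0$'s excision interval and never acquires one of its own. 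Two small housekeeping remarks: your hand-matching of $\mathcal U_+(T_0)$ with $\mathcal U_-(S^aT_0S^{-a})$ is already an instance of the general $\lambda/\rho$ mechanism, since $T_0$ \emph{is} the $E$-slot element of the root; and $S^aT_1S^{-a}$, which you list among the exceptions, needs no separate match because its $\mathcal U_-$ is absorbed into the excision interval of $T_1$, exactly as you treat $\mathcal U_-(T_2)$ for $S^{-a}T_2S^a$. Neither affects the correctness of the argument.
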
 

\begin{proof}   By Lemma \ref{plusAlphaQoverlap}, each  $\mathcal U_{+}(E) \cup \mathcal U_{-}(S^{a}ES^{-a})$ meets $Y=a/2$ in an interval.   Observing  the action of $\nu$, $\lambda$ and $\rho$, one easily sees that the normalized uplift configuration meets the line $Y = a/2$ in the union of the intervals indexed by the various $E$.      

It thus suffices to show that the various $ \mathcal U_{+}(E ) \cup \mathcal U_{-}(S^{a}TS^{-a})$ meet the line disjointly.    But, we already know that $\mathcal P$ lies below $Y = a/2$;  by Lemma \ref{lamRhoFilling}, $\mathcal P$ contains the set of bifurcation points of the (normalized)  uplift configuration.     Disjointness follows. 
 \end{proof}

\begin{Cor}\label{upBndSumExcInt}  The sum of the $w_{a}(T) = a - \sqrt{a^{2} - 4/z^{2}}$,  indexed over  the order elements $T$   appearing in the triple for any node of the  tree $\mathcal T^{\nu}_{\lambda, \rho}$, is at most $a$. 
\end{Cor}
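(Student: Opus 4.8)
The plan is to combine the two immediately preceding results. By Proposition~\ref{intervalsAtLine}, the normalized uplift configuration meets the horizontal line $Y = a/2$ in a \emph{disjoint} union of intervals
\[
\bigsqcup_{T}\; \mathcal U_{+}(T) \cup \mathcal U_{-}(S^{a}TS^{-a})\,,
\]
where $T$ ranges over the order two elements appearing in the triple for any node of $\mathcal T^{\nu}_{\lambda, \rho}$. By Lemma~\ref{howWide}, the single interval $\mathcal U_{+}(T) \cup \mathcal U_{-}(S^{a}TS^{-a})$ has width exactly $w_{a}(T) = a - 2 r_{a}(\gamma_T) = a - \sqrt{a^{2} - 4/z_T^{2}}$, where $1/z_T$ is the height of the fixed point of $T$ (so that the lower-left entry $\gamma_T$ of $T$, in the normalization of Corollary~\ref{slideMats}, equals $z_T$). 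Since the intervals are pairwise disjoint, the total width $\sum_T w_{a}(T)$ is at most the length of any single interval containing their union.

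First I would observe that, by Theorem~\ref{allScg}, all the punctures $\mathcal P$ of the normalized uplift configuration lie in the strip $r_{a}(1) \le Y < a/2$, and the whole configuration is assembled from bicorn regions whose left/right extent is controlled by the translate $S^{a}$. Concretely, the construction begins with the node $(T_{0}, T_{1}, T_{2})$ (after normalization, with $\mathcal U_{+}(T_2)$ translated by $-a$), and the subsequent moves $\nu, \lambda, \rho$ only ever produce elements whose fixed points, and hence whose uplift regions' intersections with $Y=a/2$, stay within a bounded horizontal window — essentially a single fundamental-domain width together with one extra translate. Thus the union $\bigsqcup_T \mathcal U_{+}(T) \cup \mathcal U_{-}(S^{a}TS^{-a})$ is contained in a single interval of length $a$ along the line $Y = a/2$: one may take this to be (a translate of) the projection of the fundamental domain's ideal vertex neighborhood, whose width along any horocycle $Y = \mathrm{const}$ is exactly the translation length $a = $ (length of $S^{a}$). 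Then disjointness of the pieces forces $\sum_T w_a(T) \le a$.

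The main obstacle — and the one step requiring genuine care rather than bookkeeping — is verifying that the union of all the excision intervals really does fit inside a window of width $a$, i.e. that the tree $\mathcal T^{\nu}_{\lambda, \rho}$ does not "spill over" horizontally. For this I would argue that the normalized uplift configuration, restricted to $r_a(1) \le Y < a/2$, is an infinitely-punctured domain bounded on the left by the lower boundary of $\mathcal U_{-}(G)$ and on the right by the lower boundary of $\mathcal U_{+}(F)$ for the initial data, together with the two normalization translates; by Lemma~\ref{plusAlphaQoverlap} these boundary curves, intersected with $Y = a/2$, are separated by exactly $a$. Everything the tree produces lies between these two bounding curves (this is precisely the content of Propositions~\ref{lamRhoFilling} and~\ref{theMoves}, propagated through $\nu$), so its trace on $Y=a/2$ lies in an interval of length $a$. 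Combining this containment with the disjointness from Proposition~\ref{intervalsAtLine} and the width formula of Lemma~\ref{howWide} gives
\[
\sum_{T}\, w_{a}(T) \;=\; \sum_{T}\,\bigl(a - \sqrt{a^{2} - 4/z_T^{2}}\,\bigr) \;\le\; a\,,
\]
which is the claim. (Getting the exact value $a$, rather than merely boundedness, is the only thing the argument needs from the geometry — and Lemma~\ref{plusAlphaQoverlap} supplies it, since the fundamental translation length of the group equals $a$ by the Convention.)
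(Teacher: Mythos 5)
Your argument follows the paper's approach: disjointness of the excision intervals from Proposition~\ref{intervalsAtLine}, each of width $w_{a}(T)$ by Lemma~\ref{howWide}, and containment of their union in an ambient interval of length $a$ on the line $Y=a/2$, giving $\sum_T w_a(T)\le a$. The only place you are less precise than the paper is in identifying that ambient interval and in the reason it has length exactly $a$. The paper's proof is short and direct: the configuration $\mathcal U^{\nu}_{\lambda,\rho}$ ``fills in'' from $\mathcal U_{-}(T_2)$ to $\mathcal U_{+}(T_2)$, so after the normalization (which replaces $\mathcal U_{+}(T_2)$ by its horizontal translate by $-a$), the closure meets $Y=a/2$ inside the interval running from the left endpoint of $\mathcal U_{+}(S^{-a}T_2S^{a})\cap\{Y=a/2\}$ to the left endpoint of $\mathcal U_{+}(T_2)\cap\{Y=a/2\}$, and these two sets differ by a horizontal translation of exactly $a$ --- so the ambient interval has length $a$. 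Attributing the ``separated by exactly $a$'' to Lemma~\ref{plusAlphaQoverlap} is off-target: that lemma locates where $\mathcal U_{+}(A)$ and $\mathcal U_{-}(S^aAS^{-a})$ \emph{meet}, not the overall horizontal span. Likewise, describing the bounding curves as the lower boundary of $\mathcal U_{-}(G)$ and of $\mathcal U_{+}(F)$ ``for the initial data'' (i.e.\ $\mathcal U_{-}(T_2)$ and $\mathcal U_{+}(T_1)$) only bounds the $\lambda,\rho$-subtree of the root, via Proposition~\ref{lamRhoFilling}, not the full $\nu$-joined and normalized configuration. The exact length $a$ is built into the definition of the normalization, not a consequence of that lemma. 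These are bookkeeping corrections; the architecture of your proof matches the paper's.
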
  

\begin{proof}  The  uplift configuration fills in from $\mathcal U_{-}(T_{2})$ to $\mathcal U_{+}(T_{2})$; the closure of the normalized uplift region thus meets the line in a region contained in the interval from the left endpoint of the intersection with $\mathcal U_{+}(S^{-a}T_{2}S^{a})$ to the left endpoint of the intersection with $\mathcal U_{+}(T_{2})$.   This ambient interval is  of length $a$. 
\end{proof}
  
\section{Final Arguments}\label{finalArg} 
 
\subsection{Upper Bound:   Lengths of Excision Intervals, Lengths of Geodesics}\label{lengths}

Recall that the length of a closed geodesic on a hyperbolic surface is $\ell(\gamma) = 2 \ln \epsilon_{\gamma}$, where $\epsilon_{\gamma}$ is the larger solution of  $\epsilon_{\gamma}+ 1/\epsilon_{\gamma} = t$ for $t= |\, t(M)\,|$ the absolute value of the trace of a primitive element whose axis projects to $\gamma$.      From Theorem \ref{allScg},  each simple closed geodesic $\gamma$ is the projection of the h-line of apex some element of $\mathcal P$.   In general, this gives $\gamma$ as the projection of 
the axis of $S^{a}E$ with $(E, F, G)$ a uniquely corresponding node of $\mathcal T^{\nu}_{\lambda, \rho}$; the corresponding  triple $(x,y,z)$ is such that $t = az$.   (As in the proof of Theorem \ref{allScg},  the two simple closed geodesics distinguished as artifice of our indexing are the projections of the axes of  $S^{a}T_{1}$ and $S^{a}T_{2}$, of  trace $t = ab$ and $t = ac$, respectively.) 

 One easily calculates that

\begin{equation}
 \dfrac{1}{1 + e^{\ell(\gamma)}} = \dfrac{w_{a}(E)}{2a}\;.
\end{equation}                                               
Corollary \ref{upBndSumExcInt}  thus  implies that  the sum of all  $\dfrac{1}{1 + e^{\ell(\gamma)}}$ is at most $1/2$.

 \subsection{Lower Bound:  Hausdorff Dimension Zero} 
  We must show that the upper bound given in Corollary \ref{upBndSumExcInt}  is also a lower bound.      To do this, it suffices to show that the Cantor set formed by deleting the union of the excision intervals indexed by the nodes of $\mathcal T^{\nu}_{\lambda, \rho}\,$ has Lebesgue measure zero.    In fact,  an analysis not unlike that in \cite{B} of limits along branches of our tree, reveals that much more is true.    Using  Proposition~\ref{limRatios}  below,  we show the following.  
  
 \begin{Thm}\label{hausdorff}  The complement to the union of the excision intervals is a set of zero Hausdorff dimension.   
\end{Thm}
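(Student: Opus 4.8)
The plan is to realize $\mathcal K$, the complement of the union of the excision intervals, as a Cantor set produced by a subdivision scheme indexed by the tree $\mathcal T^{\nu}_{\lambda,\rho}$, and then to read off $\dim_{H}\mathcal K=0$ from Proposition~\ref{limRatios}, which asserts that along all but countably many branches of the tree the ratio of the excised interval to the ambient interval tends to $1$.

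\emph{The subdivision.} By Proposition~\ref{intervalsAtLine} the excision intervals are the pairwise disjoint intervals $I_{w}$, one for the order-two element promoted at each node $w$ of $\mathcal T^{\nu}_{\lambda,\rho}$; by Corollary~\ref{upBndSumExcInt} they all lie in an ambient interval of length $a$ and satisfy $\sum_{w}|I_{w}|\le a$; and by Lemmas~\ref{trident}, \ref{lamRhoFilling} and \ref{plusAlphaQoverlap} together with Theorem~\ref{allScg}, removing the $I_{w}$ in order of tree level effects a Cantor dissection in which $I_{w}$ splits the \emph{ambient interval} $J_{w}$ it is removed from into the ambient intervals $J_{w\lambda}$ and $J_{w\rho}$ of the two children of $w$, so that $J_{w}=J_{w\lambda}\sqcup I_{w}\sqcup J_{w\rho}$; the ambient interval attached to the root, of length $a$, is assembled across the $\nu$-edge from the two $\lambda\rho$-subtrees and the two extra punctures of Theorem~\ref{allScg}, a bounded amount of data with no effect on dimension. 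Thus $\mathcal K=\bigcap_{n\ge 0}\bigcup_{|w|=n}\overline{J_{w}}$, there are at most $2^{n+1}$ intervals $J_{w}$ with $|w|=n$ and they cover $\mathcal K$, and (away from the countably many endpoints of the $I_{w}$) every $p\in\mathcal K$ determines a unique infinite branch $\mathbf b$, along which we write $J_{0}\supseteq J_{1}\supseteq\cdots$ for the nested ambient intervals and $I_{0},I_{1},\dots$ for the intervening excision intervals.

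\emph{Deduction of dimension zero.} By Proposition~\ref{limRatios} there is a countable set $\mathcal E$ of branches off which $|I_{n}|/|J_{n}|\to 1$; since $J_{n+1}$ is one of the two children of $J_{n}$, the splitting gives $|J_{n+1}|\le|J_{n}|-|I_{n}|$, hence $|J_{n+1}|/|J_{n}|\to 0$ for every branch $\mathbf b\notin\mathcal E$. The branches in $\mathcal E$ are the ones spiralling onto a simple closed geodesic --- the endpoints of the excision intervals --- and each converges to a single point, so they contribute only a countable subset $\mathcal B\subseteq\mathcal K$, of Hausdorff dimension $0$; it therefore suffices to bound $\dim_{H}\mathcal K_{0}$, $\mathcal K_{0}:=\mathcal K\setminus\mathcal B$. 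Fix $\delta\in(0,1)$. For $N\ge 1$ let $G_{N,\delta}$ be the set of $p\in\mathcal K_{0}$ whose branch satisfies $|J_{n+1}|\le\delta|J_{n}|$ for all $n\ge N$; since $|J_{n+1}|/|J_{n}|\to 0$ off $\mathcal E$, we have $\mathcal K_{0}=\bigcup_{N\ge 1}G_{N,\delta}$. Put $M_{N}=\max_{|w|=N}|J_{w}|<\infty$. For $p\in G_{N,\delta}$ and $n\ge N$ the level-$n$ ambient interval of $p$ has length at most $M_{N}\delta^{n-N}$, and at most $2^{n+1}$ of the $J_{w}$ with $|w|=n$ meet $G_{N,\delta}$, each of diameter $\le M_{N}\delta^{n-N}\to 0$; covering $G_{N,\delta}$ by these intervals gives, for every $s>0$,
\[
\sum_{\substack{|w|=n\\ J_{w}\cap G_{N,\delta}\neq\emptyset}}|J_{w}|^{s}\ \le\ 2^{n+1}\bigl(M_{N}\delta^{n-N}\bigr)^{s}\ =\ 2\bigl(M_{N}\delta^{-N}\bigr)^{s}(2\delta^{s})^{n}\,,
\]
which tends to $0$ as $n\to\infty$ whenever $2\delta^{s}<1$, i.e. $s>\log 2/\log(1/\delta)$. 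Hence $\mathcal H^{s}(G_{N,\delta})=0$ for such $s$, so $\dim_{H}G_{N,\delta}\le\log 2/\log(1/\delta)$, and by countable stability $\dim_{H}\mathcal K_{0}=\sup_{N}\dim_{H}G_{N,\delta}\le\log 2/\log(1/\delta)$. Letting $\delta\downarrow 0$ gives $\dim_{H}\mathcal K_{0}=0$, hence $\dim_{H}\mathcal K=\max(\dim_{H}\mathcal K_{0},\dim_{H}\mathcal B)=0$.

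\emph{The main obstacle.} Everything rests on Proposition~\ref{limRatios}. By Lemma~\ref{howWide} the excised interval has width $|I_{w}|=a-\sqrt{a^{2}-4/z_{w}^{2}}\sim 2/(a z_{w}^{2})$, where $1/z_{w}$ is the height of the promoted fixed point; so the proposition says that the ambient interval $|J_{w}|$ shrinks essentially as fast as $z_{w}^{-2}$ along every branch outside a countable ``slow-growth'' family. Establishing this, and pinning down that family (the eventually periodic branches, which are exactly the spiralling ones), requires --- in the spirit of Bowditch's analysis of the Markoff tree in \cite{B} --- controlling the growth of the adjusted-Markoff triples along the branches of $\mathcal T^{\nu}_{\lambda,\rho}$ and, above all, an upper bound on $|J_{w}|$ along the slowest, Fibonacci-type branches; that comparison is where the real work lies.
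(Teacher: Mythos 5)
Your proposal is correct, and it takes the same basic strategy as the paper --- realize the complement as a Cantor set via the tree subdivision $J_{w}=J_{w\lambda}\sqcup I_{w}\sqcup J_{w\rho}$, and bound the Hausdorff dimension by the exponential rate at which the ambient intervals shrink --- but it handles the exceptional branches in a genuinely more careful way, and in doing so it repairs what looks like a real gap in the paper's argument. The paper asserts, citing Proposition~\ref{limRatios}, that for any $\epsilon>0$ only finitely many nodes of $\mathcal T^{\nu}_{\lambda,\rho}$ have excision-to-ambient ratio below $1-\epsilon$; it then removes those finitely many excision intervals and bounds the dimension of the remaining Cantor construction by $s(1-\epsilon)=\log 2/\log(1/\epsilon)$. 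But that finiteness assertion is not what Proposition~\ref{limRatios} gives: along any eventually-$\lambda$ or eventually-$\rho$ branch the ratio converges to $2\sqrt{a^{2}-4/x^{2}}/(a+\sqrt{a^{2}-4/x^{2}})$, which for small persisting $x$ (e.g. $a=3$, $x=1$, limit $\approx 0.854$) is a fixed value strictly below $1$, so such a branch supplies \emph{infinitely} many nodes with ratio below $1-\epsilon$ once $\epsilon$ is small. Your decomposition sidesteps this: you first discard the countably many eventually-periodic branches as the countable set $\mathcal B$ (of dimension zero, since each such branch collapses to a single endpoint), and then on the remainder $\mathcal K_{0}$ you have $|J_{n+1}|/|J_{n}|\to 0$ along every branch, which you exploit via the stratification $\mathcal K_{0}=\bigcup_{N}G_{N,\delta}$ and countable stability of Hausdorff dimension. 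The covering estimate $\sum_{|w|=n}|J_{w}|^{s}\le 2(M_{N}\delta^{-N})^{s}(2\delta^{s})^{n}$ and the conclusion $\dim_{H}G_{N,\delta}\le\log 2/\log(1/\delta)$ are correct, and letting $\delta\downarrow 0$ finishes. In short: same underlying dimension calculation as the paper, but with the countable exceptional set isolated correctly rather than swept into a false finiteness claim. (You also flag, appropriately, that all the analytic content still rests on Proposition~\ref{limRatios} itself, which you take as given.)
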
 

\begin{proof}   It suffices to show that $s=0$ is an upper bound for this Hausdorff dimension.

Consider first a  Cantor set constructed iteratively by removing a centered subinterval with fixed ratio of $k\in(0,1)$ from each interval remaining at the $n$-th step.   If the original interval has finite length $L$, then at the $n$-th iteration there are $2^{n}$ intervals each of length $(\,(1-k)/2\,)^{n}L$.    An upper bound for the Hausdorff dimension is then obtained by  finding the unique value of $s$ such that  $\lim_{n\to \infty} 2^{n} (\,(1-k)/2\,)^{ns}L^{s}$ is finite and non-zero --- this is $s = \log 2/\log (2/(1- k)\,)$.

In the case where a Cantor set is formed by removing possibly non-centered subintervals, but with a constant ratio of $k$,  a naive upper bound for the lengths of intervals at the $k$-th iteration is simply  $(1-k)^{n}L$.    The corresponding upper bound on the Hausdorff dimension is $s(k) :=  \log 2/\log (1/(1-k)\,)$.     Note that $s$ tends to zero as $k$ increases to 1.  

 We now turn to our Cantor set.    By Proposition~\ref{limRatios} (below),  for any $\epsilon > 0$ there are at most finitely many nodes of $\mathcal T^{\nu}_{\lambda, \rho}\,$ such that the ratio of the corresponding excision interval to its ambient interval is less than $1- \epsilon$.   Excision of these intervals leads to finitely many subintervals; restricting the excision process to each gives a Cantor set, of Hausdorff dimension at most $s(1-\epsilon)$.     The Hausdorff dimension of their union, our Cantor set, thus has this same upper bound. 
 Letting $\epsilon$ tend to zero, we find that $s=0$ is indeed an upper bound. 
 \end{proof}

 \begin{Prop}\label{limRatios}   Fix a  directed branch beginning at $(T_{0}, T_{1}, T_{2})$ in $\mathcal T^{\nu}_{\lambda, \rho}\,$.   The limit of the  ratio of lengths of the excision interval of $E$ to the ambient interval  (on $Y=a/2$) bounded by $\mathcal   U_{-}(G)$ and $\mathcal U_{+}(F)$ equals $1$ unless the branch eventually ends in an infinite sequence of exactly one of $\lambda$ or $\rho$.  In this purely periodic case, there is an $x$ as above such that the limit is 
 $\dfrac{ 2 \sqrt{a^{2}-4/x^{2} } }{a +\sqrt{a^2  - 4/x^2}} \,$.   
 \end{Prop}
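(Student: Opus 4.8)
The plan is to push the statement down to the arithmetic of the adjusted Fricke triples running along the branch and then to analyze two explicit sequences asymptotically. Write $(x_n,y_n,z_n)$ for the solution of the adjusted Fricke equation attached to the $n$-th node, normalized (Corollary~\ref{gotTriples}) so that $z_n=\max\{x_n,y_n,z_n\}$ is the inverse height of the fixed point of the $E$-entry while $x_n,y_n$ are those of the $G$- and $F$-entries. First I would record two formulas. By Lemma~\ref{howWide} the excision interval of $E$ has length $e_n=a-2\,r_a(z_n)=a-\sqrt{a^2-4/z_n^2}$. For the ambient interval, the defining hyperbolas of $\mathcal U_-(G)$ and $\mathcal U_+(F)$ together with Lemma~\ref{plusAlphaQoverlap} place its two endpoints on $Y=a/2$ at the inner edges of these regions, and reading the centers of the two uplift regions off the normalized matrices of Corollary~\ref{slideMats}---their separation equalling $z_n/(x_ny_n)$ after one use of the Fricke relation---gives $L_n=r_a(x_n)+r_a(y_n)-z_n/(x_ny_n)$; that $L_n>0$ reduces, with $A=a^2/4$, $p=1/x_n^2$, $q=1/y_n^2$, to $(A-p)(A-q)\ge A(A-p-q)$.

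Next I would compute the two tree moves on triples. From the definitions of $\lambda$ and $\rho$ and the fixed-point heights behind Corollary~\ref{slideMats} (compare Proposition~\ref{theMoves}), $\lambda$ acts as $(x,y,z)\mapsto(x,z,axz-y)$, fixing the $G$-value, and $\rho$ as $(x,y,z)\mapsto(z,y,ayz-x)$, fixing the $F$-value. Substituting these into the formulas above and using $\tfrac1{xyz}(x^2+y^2+z^2)=a$ yields the bookkeeping identity $L_n=e_n+L^{\lambda}_{n+1}+L^{\rho}_{n+1}$, where $L^{\lambda}_{n+1},L^{\rho}_{n+1}$ are the ambient lengths at the $\lambda$- and $\rho$-children; hence $e_n/L_n=1-(L^{\lambda}_{n+1}+L^{\rho}_{n+1})/L_n$, and the whole statement concerns the size of this remainder.

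Consider first the purely periodic case, say the branch eventually constant $\lambda$ (the $\rho$-case is symmetric, with $x$ the frozen $F$-value). Then the $G$-value freezes at some $x$, so $y_n=z_{n-1}$ and $z_n=axz_{n-1}-z_{n-2}$: a linear recursion with dominant root $\theta=\tfrac12(ax+\sqrt{a^2x^2-4})$, for which the Fricke relation furnishes the first integral $z_n^2+z_{n-1}^2-axz_nz_{n-1}=-x^2$. I would compute $\lim z_n^2e_n=2/a$ directly and, after rewriting $L_n=\bigl(r_a(z_{n-1})-\tfrac a2\bigr)+\tfrac1x\bigl(\theta-z_n/z_{n-1}\bigr)$ and evaluating each bracket by means of the first integral, $\lim z_n^2L_n=\tfrac{2\theta}{a\sqrt{a^2x^2-4}}$; dividing, $\lim e_n/L_n=\sqrt{a^2x^2-4}/\theta=\tfrac{2\sqrt{a^2-4/x^2}}{a+\sqrt{a^2-4/x^2}}$.

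Finally suppose the branch is not eventually monotone. A switch between $\lambda$ and $\rho$ overwrites the previously frozen coordinate by the previous maximum, so along a branch with infinitely many switches both non-maximal coordinates tend to $\infty$; when they do so without long stagnation---for instance along the strictly alternating branch---all of $\tfrac a2-r_a(x_n)$, $\tfrac a2-r_a(y_n)$, $\tfrac a2-r_a(z_n)$ tend to $0$ fast enough that $L^{\lambda}_{n+1}+L^{\rho}_{n+1}=o(L_n)$, i.e.\ $e_n/L_n\to1$. The main obstacle is that the branch may nevertheless contain arbitrarily long runs of a single move: inside such a run one coordinate is momentarily frozen at a value $v$, the dynamics is exactly the linear recursion of the previous paragraph with parameter $v$, and $e_n/L_n$ hovers near $1-1/\theta_v^2<1$. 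The resolution is that the frozen value of a run is the maximal coordinate at the run's start, so these values increase without bound over successive runs and $1-1/\theta_v^2\to1$ along them. To make this precise I would establish a run-uniform estimate: at every node of a run with frozen value $v$, $e_n/L_n\ge 1-1/\theta_v^2-\delta_n$ with $\delta_n\to0$ independent of where the run starts, obtained by bounding the followed-child ratio $L^{\lambda}_{n+1}/L_n$ (resp.\ $L^{\rho}_{n+1}/L_n$) above by $1/\theta_v^2+o(1)$ and the discarded-child ratio above by $o(1)$, using the exact rewriting $L^{\lambda}_{n+1}=\tfrac1{ax_n^2}\bigl(y_n^2/z_n^2-1/\theta_{x_n}^2\bigr)-\bigl(\tfrac a2-r_a(z_n)-\tfrac1{az_n^2}\bigr)$ and its $\rho$-analogue. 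Letting $v\to\infty$ along the runs then gives $e_n/L_n\to1$ in this case as well.
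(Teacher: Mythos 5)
Your proposal is a genuine alternative to the paper's argument, and its core structural observation is attractive.  The decomposition identity $L_n = e_n + L^{\lambda}_{n+1} + L^{\rho}_{n+1}$ (which I verified: using the Fricke relation, $e_n + L^\lambda_{n+1} + L^\rho_{n+1} = r_a(x)+r_a(y) - a + (x^2+y^2)/(xyz) = r_a(x)+r_a(y)-z/(xy)= L_n$) does not appear in the paper, which instead evaluates the denominator directly via Taylor expansion.  Your treatment of the eventually--periodic case is correct and, in my view, cleaner than the paper's: you use the first integral $z_n^2+z_{n-1}^2-axz_nz_{n-1}=-x^2$ of the linear recursion to evaluate $\lim z_n^2 L_n$, where the paper instead does a two--term Taylor computation of $r_a(y)$ and $z/(xy)$.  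The exact rewriting $L^{\lambda}_{n+1}=\tfrac1{ax_n^2}\bigl(y_n^2/z_n^2-1/\theta_{x_n}^2\bigr)-\bigl(\tfrac a2-r_a(z_n)-\tfrac1{az_n^2}\bigr)$ also checks out (the two displays differ by $\tfrac1x\bigl(\tfrac1{\theta_x}-\tfrac1{ax}(1+1/\theta_x^2)\bigr)$, which vanishes since $1+1/\theta_x^2=ax/\theta_x$).

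Where the proposal falls short is the non--monotone case.  You have correctly located the crux --- a run--uniform bound asserting that at every node of a run with frozen value $v$ one has $e_n/L_n\ge 1-1/\theta_v^2-o(1)$, with $o(1)$ controlled independently of the run's starting point --- but you do not prove it, and this is where the real work lies.  What must be shown (and what you leave implicit) is that the ratio $t=y_n/z_n$ is already within $O(1/z_n^2)$ of $1/\theta_v$ at the very first node of the run, not merely in the long--run limit; this follows from the first integral $(t-\theta_v)(t-1/\theta_v)=-v^2/z_n^2$, but needs to be said, and then fed back through the rewriting and into $L^{\lambda}_{n+1}/e_n=\tfrac12\,\tfrac{t+1/\theta_v}{\theta_v-t}+o(1)$.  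Also, the statement ``the frozen value of a run is the maximal coordinate at the run's start'' is not quite right: after applying $\rho^k$ to reach $(z_{k-1},y_0,z_k)$ and then $\lambda$, the new frozen $x$ is $z_{k-1}$, which is the second--largest, not the largest, coordinate at the switch; the essential point (frozen values strictly increase across runs, hence tend to infinity) survives, but the claim should be corrected.  By contrast the paper splits the non--monotone case into bounded and unbounded blocks, does a Taylor expansion of the denominator to an arbitrary order $N$, and then invokes a separate growth lemma ($z<(a\,l)^{N+2}$ for block length $\le N$) to kill the remainder --- more computational, but it replaces your run--uniform estimate with an explicit polynomial bound.  If you supply the quantitative run--uniform estimate your route is self--contained and handles bounded and unbounded blocks in one stroke, which is an improvement; as written it remains a plan, not a proof.
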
 
 
 \begin{proof} 
 
 We delete the excision interval of $E$ from the ambient interval lying between  the excision interval of $S^{-a} G S^a$ and that of $F$.    Thus,  by Lemma \ref{howWide},   we excise an interval of length 
$ a - 2 r_a(z)$     from  one of  length    
$F(\infty) + r_a(y) - ( G(\infty) - r_a(x) )$.   But, using the translated versions of our matrices, given in Corollary \ref{slideMats} on page ~\pageref{slideMats}, we find that this latter interval has length  $x/yz + y/xz - a + r_a(x) + r_a(y)$.    Now  $x/yz + y/xz - a = -z^2/xyz$;  solving Equation \eqref{AdjFrickeEq}, the adjusted Fricke equation,   for $z$,   (with $z$ sufficiently large) allows us to write  $z/(xy) = a/2 + \sqrt{a^{2}/4 - 1/x^{2}- 1/y^2}$.    We are thus to find the limit of 
\begin{equation}\label{ratioExpress}
\dfrac{            a - 2 r_a(z) }{ r_a(x) + r_a(y) - (\,a/2 + \sqrt{a^{2}/4 - 1/x^{2}- 1/y^2}\,)}\;.
\end{equation}
Throughout our proof we use Taylor series approximation of $f(\delta) = \sqrt{s^2 -\delta}$ around $\delta = 0$:   
 \[\sqrt{s^2 - \delta} =    s - \delta/(2 s)  - \sum_{j=2}^{N} \,c_{j}\,  \dfrac{\delta^{j}}{s^{2 j-1}}\, + O(\delta^{N+1})\;,\]
 with $c_{j} = \dfrac{ 1\cdot3 \cdots (2j-3)}{2^{j}\,j!}\, $.   
In particular, the numerator of our ratio is 
\begin{equation}
       a - 2 r_a(z) = \dfrac{2}{a z^{2}} + O(z^{-4})\,.
\end{equation}

  Our denominator is symmetric in $x$ and $y$;  we can and do relabel each pair such that $x \le y$  (we thus no longer demand that $F$ fixes the point whose height is $1/y$). We now treat three cases:  our branch eventually ends in repeating exactly one of $\rho$ or $\lambda$;  it has unbounded blocks of either $\rho$ or $\lambda$; and finally,  it has   bounded blocks of either.       

{\em Eventually repeating $\rho$ or $\lambda$.}   We first treat the case of   the branch eventually repeating in one of $\rho$ or $\lambda$.  Here,  the smallest value of each triple, $x$ (eventually) remains constant, whereas $y$ and $z$ both go to infinity.      Since $z/xy = a/2 + \sqrt{ (r_{a}(x)\,)^{2} -1/y^{2}}$, two term approximation gives
 $a/2 + r_{a}(x) - \frac{1}{2 r_{a}(x) y^{2}} + O(y^{-4})$.   Using two term approximation on $r_{a}(y)$ as well, we find that  the denominator is 
 \[- \frac{1}{a y^{2}} + \frac{1}{2 r_{a}(x) y^{2}} + O(y^{-4})  = \dfrac{a - 2 r_{a}(x)}{ 2 a   r_{a}(x) y^{2}}+ O(y^{-4})\,.\]
 
Finally,  $z/y = \frac{x}{2} (a + 2 r_{a}(x)\,) + O(y^{-2})$, and $(a + 2 r_{a}(x)\,)(a - 2 r_{a}(x)\,)  = 4/x^{2}$.    Hence we find 
 \[
 \begin{aligned} 
\dfrac{a - 2 r_a(z) }{ r_a(x) + r_a(y) - z/xy} &= \dfrac{2/a + O(z^{-2}) }{(z/y)^{2}(\, \frac{ a - 2 r_{a}(x)}{2 a r_{a}(x)} + O(y^{-2})\,)}\\
           &=\dfrac{2/a + O(z^{-2}) }{  \frac{ a + 2 r_{a}(x)}{2 a r_{a}(x)} + O(y^{-2}) }\\
           &=  \dfrac{4 r_{a}(x) + O(z^{-2}) }{ a + 2 r_{a}(x) + O(y^{-2}) }\\\;.
 \end{aligned}          
 \] 
 Taking the limit with $x$ fixed and $y, z$ tending to infinity gives 
\[
\dfrac{4 r_{a}(x)   }{ a + 2 r_{a}(x)} = \dfrac{ 2 \sqrt{a^{2} - 4/x^{2}}}{ a + \sqrt{a^{2}- 4/x^{2}}}\;,
\]
 as claimed.   Note that  these values tend to $1$ as  $x$ itself tends to infinity.   
 
  \bigskip

{\em Unbounded blocks of $\rho$, $\lambda$.}  
 Consider any branch where the number of consecutive nodes of $\rho$ or of $\lambda$ is unbounded.     For each positive integer $N$  there is an infinite set of disjoint blocks of $N$ consecutive applications of $\lambda$ or $\rho$ on the branch.     But, as $N$ increases, we thus find that the ratios of lengths of excised to ambient interval give ever better approximations to the limit ratios along (eventually) constant branches.    Moreover, on our branch we must have that   these corresponding values of $x$ are also (eventually) increasing.   Thus, the limit of ratios along this branch equals  $\lim_{x \to \infty}\,  \dfrac{ 2 \sqrt{a^{2}-4/x^{2} } }{a +\sqrt{a^2  - 4/x^2}}\,$.   But,  this limit equals $1$.

 \bigskip 

{\em Bounded blocks of $\rho$, $\lambda$.}    On any branch not given by eventually repeating $\lambda$ or $\rho$,  each of $x$, $y$ and $z$ goes to infinity.     We again first concentrate on the denominator, using Taylor series with our assumption  that $x\le y$, 

\[
\begin{aligned}
&\sqrt{a^{2}/4 - 1/x^{2} } + \sqrt{a^{2}/4 - 1/y^2} - (\, a/2 + \sqrt{a^{2}/4 - 1/x^{2}- 1/y^2}\,)\\
\\
 & \phantom{ a^{2}/4-}= \sum_{j=2}^{N} (\dfrac{2}{a})^{2j -1}\,c_{j}\,[\, (x^{-2}+y^{-2})^{j} - (x^{-2j}+y^{-2j})  \,]+ O(x^{-2(N+1)})
\end{aligned}
 \]

\bigskip

\[
\begin{aligned}
&\phantom{}= \dfrac{2}{a^{3}x^{2}y^{2}}\cdot \big( \, 1 + \dfrac{3}{a^{2}} (x^{-2}+ y^{-2}) +\cdots +  \dfrac{2^{N-2}c _{N}}{a^{2N -4}} \sum_{k=1}^{N-1}\binom{N}{k} x^{-2(N-k) + 2} y^{-2k + 2} \; \big)    \\
&  
 \;\;\;\;\;\;  \;\;\;\;\;\;    + O(x^{-2(N+1)})      \;. 
 \end{aligned}
\]

\bigskip
 We thus find that our ratio is 
 
 \[
 \dfrac{1 + O(z^{-2})}{ \dfrac{z^{2}}{a^{2}x^{2}y^{2}}\cdot \big( \, 1  +\cdots + \frac{2^{N-2}}{N!}\; \frac{ 1\cdot3 \cdots (2N-3)}{a^{2N -4}} \sum_{k=1}^{N-1}\binom{N}{k} x^{-2(N-k) + 2} y^{-2k + 2} \; \big)  + z^{2}O(x^{-2(N+1)}) }     \;. 
 \]

 Since $\dfrac{z^{2}}{x^{2}y^{2}} = a^{2} +O(x^{-2}+y^{-2})$,  we find that the limit equals $1$ if there is some $N$ such that $z^{2}/x^{2N}$ has a finite limit (on our given branch).        We apply the next Lemma (replacing $N$ here by at worst $2N+4$, since we can assume that $a<x\,$).
 \end{proof}

 \bigskip 
 
 For clarity's sake, we use $(x,y,z)$ as in previous sections, and at each node let $l = \min(x,y)$.

\begin{Lem}   Fix a  directed branch beginning at $(T_{0}, T_{1}, T_{2})$ in $\mathcal T^{\nu}_{\lambda, \rho}\,$.    If the length of blocks of consecutive $\rho$ or $\lambda$ along the branch is bounded by $N$, then at each node (beyond the first change between $\lambda$ and $\rho$), one has  $z <   (a l)^{N+2}$.   
\end{Lem}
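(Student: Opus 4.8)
The plan is to track how the triple $(x,y,z)$ evolves along a directed branch, using the explicit effect of $\lambda$ and $\rho$ on the adjusted Fricke equation as recorded in Figure~\ref{frickeTreeFig} and Proposition~\ref{theMoves}. Recall that at each node, if $z\ge\max\{x,y\}$ then $z = axy - (x^2+y^2)/z$, so that the replacement move (either $\lambda$ or $\rho$) sends the maximal entry $z$ to the new value $z' = axy - z = (x^2+y^2)/z$, while keeping $x$ (or $y$) fixed and promoting the old pair's non-retained minimum-type entry appropriately. The key elementary estimate is that a single $\rho$ or $\lambda$ step replaces the largest entry $z$ of $(x,y,z)$ by a new largest entry bounded above by $a l' z$, where $l'$ is the new minimum $\min(x', y')$ of the resulting triple: indeed the new maximal entry is $ax y - z \le a\max\{x,y\}\cdot\min\{x,y\} = a\,\max\{x,y\}\,l$, and after the step $\max\{x,y\}$ becomes the new $z$-value's partner, so $\max$ is controlled by the previous $z$.

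First I would set up the recursion carefully: along a block of $k\le N$ consecutive $\lambda$'s (say), the minimum entry $l$ stays constant — this is exactly the content of ``$\lambda$ (or $\rho$) retains $F$ or $G$ in its original position'' from the proof of Proposition~\ref{lamRhoFilling} together with the fact from Proposition~\ref{theMoves} that $FGF$ (resp. $EFE$) is the new lowest fixed point. So within one block the largest entry grows by at most a factor $al$ per step, giving $z_{\text{end of block}} \le (al)^k \cdot z_{\text{start of block}}$ — but one needs to be a little more careful because the ``$l$'' relevant to the bound changes between consecutive steps inside the block. The cleaner bound comes from the observation that after the first step inside a block the old $z$ itself becomes (or bounds) $\max\{x,y\}$, and then $z_{\text{new}} \le a\,z_{\text{old}}\,l$ literally, so iterating over the block of length $k$ gives roughly $z \le (al)^{k-1}(\text{value just before the block})\cdot(\text{geometric factors})$; since $k\le N$ and $l\ge x$ eventually, this is comfortably under $(al)^{N}$ times the pre-block value.

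Then I would handle the transitions between blocks. When the branch switches from a $\lambda$-block to a $\rho$-block (or vice versa), the entry that was held fixed now gets promoted and the formerly promoted entry can become the new minimum; across the switch the new $z$ is still $a\cdot(\text{product of two of the previous entries}) - (\text{the third}) \le a\,z\,l$ with $l$ the new minimum, so one switch costs at most one more factor of $al$. Thus between two consecutive switch-points, spanning at most one full block (length $\le N$) plus a single transition step, the largest entry multiplies by at most $(al)^{N+1}$. The statement ``beyond the first change between $\lambda$ and $\rho$'' is there precisely so that we may always compare $z$ at a node to the value $l$ of the minimum at the most recent switch-point, yielding $z \le (al)^{N+2}$ after absorbing the final in-block growth and the transition.

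The main obstacle, and the step deserving the most care, is the bookkeeping of \emph{which} entry is the minimum at each node and confirming that it is genuinely held fixed throughout a $\lambda$- or $\rho$-block, so that the factor $al$ appearing in each single-step bound $z_{\text{new}}\le a\,z_{\text{old}}\,l$ can legitimately be replaced by $a\,l_{\text{block}}\le a\,l_{\text{node}}$ in the telescoped product. This requires invoking the inequality-preservation clause of Proposition~\ref{theMoves} ($z\ge\max\{x,y\}$ is preserved by $\lambda$ and $\rho$) and tracing, via the explicit triples in Figure~\ref{frickeTreeFig}, that within a constant block the fixed coordinate is indeed the minimum — a finite case check. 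Once that is pinned down, the estimate $z<(al)^{N+2}$ follows by the telescoping product over the at-most-$N$ steps since the last switch together with one transition factor, and everything else is the routine arithmetic of the adjusted Fricke equation.
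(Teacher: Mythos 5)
Your overall strategy---telescope over a block where the minimum entry $l$ is constant, using a per-step bound of the form $z_{\text{new}}<a\,l\,z_{\text{old}}$---is essentially the paper's. But there are two problems, one a flat error in reading the moves and one a genuine gap in the argument.

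First, the moves. You write that $\lambda$ or $\rho$ ``sends the maximal entry $z$ to the new value $z'=axy-z=(x^2+y^2)/z$.'' That is not what these moves do. They Vieta-flip a \emph{non}-maximal coordinate: $\rho:(x,y,z)\mapsto(z,y,ayz-x)$ replaces $x$ by $ayz-x$, and $\lambda:(x,y,z)\mapsto(x,z,axz-y)$ replaces $y$ by $axz-y$. The new third coordinate $ayz-x$ (resp.\ $axz-y$) is strictly larger than $z$, so the maximum \emph{grows} along the branch; replacing $z$ by $axy-z<z$ would move toward the minimal node, not away from it. Your later step-bound $z_{\text{new}}\le a\,z_{\text{old}}\,l$ (from $z'=ayz-x<ayz$ when $l=y$ is retained) happens to be correct, so your own internal contradiction is quietly resolved, but the setup as written is wrong.

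Second, and more importantly, the gap: a telescoped product of factors $al$ only bounds $z$ in terms of \emph{the value of $z$ at the start of the block} (``the pre-block value,'' in your words). The Lemma's conclusion $z<(al)^{N+2}$ is a bound purely in terms of $a$, $N$, and the minimum $l$ at the \emph{current} node, with no reference to any earlier $z$. The paper closes this loop by establishing a fresh base case at every switch: if the pre-switch triple $(x,y,z)$ has minimum $x$, then applying $\rho$ gives new maximum $ayz-x<ayz<ay(axy)=a^2xy^2\le a^2y^3=a^2(l')^3$, where $l'=y$ is the new (and then block-constant) minimum; this uses both the Fricke inequality $z<axy$ and the inequality ``old min $\le$ new min.'' Starting from $z<a^2l^3$ and multiplying by $al$ at each of the remaining at-most-$(N-1)$ in-block steps gives $z<a^{N+1}l^{N+2}<(al)^{N+2}$. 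Your proposal asserts the same conclusion ``after absorbing the final in-block growth and the transition'' without ever producing this base case, so as written the argument does not reach the claimed bound; you must show that the Fricke identity resets the bound to $a^2l^3$ at each switch before the telescope can be closed.

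Finally, a small but necessary detail you correctly flag but should actually carry out: the claim that after $\rho$ the minimum is the $y$-coordinate and after $\lambda$ it is the $x$-coordinate (so that $l$ is constant within a block) needs an induction on the tree with two base cases, one at $(T_0,T_1,T_2)$ and one at $\nu(T_0,T_1,T_2)$, as the paper notes.
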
 
 
\begin{proof}   Recall that  $\rho:(x,y,z) \mapsto (z, y, a y z - x)=:(x',y', z')$ and  $\lambda:(x,y,z) \mapsto (x, z,  a x z - y)=:(x'',y'', z'')$.    Beginning at any node  of corresponding triple $(x,y,z)$,   induction shows  that $y'$ and $x''$ are the minimum of their respective triples.    (The main base case relies on the minimality of the solutions from $(T_{0}, T_{1}, T_{2})\,$; the secondary base case arising from $\nu(T_{0}, T_{1}, T_{2})$ is easily verified.)  

  We always have $z < a xy$.    Thus,  if $l = x$, then  $z'< a^{2}y'^{3}$ and if $l=y$ then $z'' < a^{2} x''^{3}$.     Therefore, under these respective assumptions,   we find $z < a^{2}l^{3}$ holds for this new generation.

Now suppose that $z\le a^{k} l^{n}$.   Then with $l=y$ we have $z'  < a^{k+1} y^{n+1}$.    With $l = x$, we have  
  $z'' < a^{k+1} x^{n+1}$.      That is, under these assumptions, we find    $z \le a^{k+1} l^{n+1}$ holds for this new generation.

In summary,  after each change to either $\rho$ to $\lambda$,  we have $z < a^{2}l^{3}$;  this followed by $n-1$ more (consecutive) applications of the current $\rho$ or $\lambda$    then gives $z < a^{1+n}l^{2+n}$.    The result follows. 
\end{proof}


\begin{thebibliography}{Thurst88}

\bibitem[AHS]{AHS} H. Akiyoshi, H. Miyachi, M. Sakuma, {\em A refinement of McShane's identity for quasifuchsian punctured torus groups}, in:   In the tradition of Ahlfors and Bers, III,  21--40, Contemp. Math., 355, Amer. Math. Soc., Providence, RI, 2004.

\bibitem[BLS]{BLS} A. F. Beardon, J. Lehner,   M. Sheingorn,
{\em Closed geodesics on a Riemann surface with application to the Markov spectrum},
Trans. Amer. Math. Soc. 295 (1986), no. 2, 635--647. 


\bibitem[B]{B} B. Bowditch, 
{\em A proof of McShane's identity via Markoff triples},
Bull. London Math. Soc. 28 (1996), no. 1, 73--78.

\bibitem[BMR]{BMR} B. Bowditch, C. Maclachlan,  A. Reid,
{\em Arithmetic hyperbolic surface bundles},
Math. Ann. 302 (1995), 31--60.


\bibitem[BZ]{BZ}   F. Bonahon and X.  Zhu, 
{\em The metric space of geodesic laminations on a surface. II. Small surfaces} in Proceedings of the Casson Fest, 509--547,
Geom. Topol. Monogr., 7, Geom. Topol. Publ., Coventry, 2004.


\bibitem[C]{C} H. Cohn, {\em  Approach to Markoff's minimal forms through
modular functions},
 Ann. of Math. (2)  61 (1955), 1--12.

\bibitem[DN]{DN} N. Do, P. Norbury, {\em Weil-Petersson volumes and cone surfaces},   preprint: Arxiv:   math.GT/06033406, 2006
 
\bibitem[GSR]{GSR} C. Goodman-Strauss,  Y.  Rieck, 
{\em Simple geodesics on a punctured surface},
Top.  Appl. 154 (2007), no. 1, 155--165. 

\bibitem[H]{H} A. Haas, {\em Diophantine approximation on hyperbolic
Riemann surfaces}, Acta Math. 156 (1986), 33--82


\bibitem[JM]{JM} T. J\o rgensen, A. Marden, {\em Two doubly degenerate groups}, Quart. J. Math. 30 (1979),  143--156

\bibitem[Mc]{Mc} G. McShane, {\em Simple geodesics and a series constant over Teichmuller space},   Invent. Math.  132  (1998),  no. 3, 607--632

\bibitem[Mc2]{Mc2} \bysame, {\em Weierstrass points and simple geodesics},   Bull. London Math. Soc. 36  (2004),  181--187

\bibitem[Mc3]{Mc3} \bysame, {\em Length series on Teichm\"uller space},   preprint: Arxiv:   math.GT/0403041, 2004

\bibitem[M]{M} M. Mirzakhani, 
{\em Growth of the number of simple closed geodesics on hyperbolic surfaces}, Ann. Math. to appear.

\bibitem[M2]{M2} \bysame
{\em Weil-Petersson volumes and intersection theory on the moduli space of curves},   J. Amer. Math. Soc.  20  (2007),  no. 1, 1--23 

\bibitem[M3]{M3} \bysame
{\em Simple geodesics and Weil-Petersson volumes of the moduli spaces of bordered Riemann surfaces},  Invent. Math.  167  (2007),  no. 1, 179--222
 
\bibitem[R]{R} I. Rivin, 
{\em A simpler proof of Mirzakhani's simple curve asymptotics}, Geom. Dedicata 114 (2005), 229--235

\bibitem[Sch]{Sch} A. L. Schmidt, 
{\em Minimum of quadratic forms with respect to Fuchsian groups. I}, J. Reine Angew. Math. 286/287  (1976),  341--368


\bibitem[SS1]{scgArt} T. A. Schmidt, M. Sheingorn, 
{\em Parametrizing
simple closed geodesy on
\threesurf}, J. Aust. Math. Soc.  74  (2003),  no. 1, 43--60


\bibitem[SS2]{ssEG} \bysame, 
{\em Low height geodesics on \threesurf:    height formulas and examples},  Int. J. Number Theory  3  (2007),  no. 3, 475--501.

\bibitem[Sh]{Sh} M. Sheingorn, 
{\em Characterization of simple closed
geodesics on Fricke surfaces}, Duke Math. J.  52 (1985), 535--545

\bibitem[TWZ]{TWZ}  S. P. Tan,  Y. L. Wong, Y. Zhang, 
{\em Generalizations of McShane's identity to hyperbolic cone-surfaces},  J. Differential Geom.  72  (2006),  no. 1, 73--112.

\bibitem[TWZ2]{TWZ2}  \bysame, 
{\em Generalized Markoff maps and McShane's identity},  Advances Math. 217  (2008),   761--813.

\bibitem[V]{V} L. Yu. Vulkh, 
{\em  The Markov spectra for triangle groups},  J. Number Theory 67 (1997), no. 1, 11--28.

\bibitem[W]{W} S. Wolpert, 
{\em  On the K\"ahler form of the moduli space of once punctured tori},  Comment. Math. Helv. 58 (1983), no. 2, 246--256 


\end{thebibliography}
\end{document}